\providecommand{\U}[1]{\protect\rule{.1in}{.1in}}
\newtheorem{theorem}{Theorem}
\newtheorem{lemma}[theorem]{Lemma}
\newenvironment{proof}[1][Proof]{\noindent\textbf{#1.} }{\ \rule{0.5em}{0.5em}}
\title{The infinitely many zeros of stochastic coupled oscillators driven by random forces}
\author{H. de la Cruz  \\
	EMAp-FGV / hugo.delacruz@fgv.br  \\
	\and
	J.C.Jimenez \\
	ICIMAF / jcarlos@icimaf.cu \\
\and
	R.J.Biscay\\
	CIMAT / rolando.biscay@cimat.mx \\
	}
\date{ }
\begin{document}

\maketitle

\begin{abstract}
In this work, previous results concerning the infinitely many zeros of
single stochastic oscillators driven by random forces are extended to the general class
of coupled stochastic oscillators. We focus on three main subjects: 1) the analysis of
this oscillatory behavior for the case of coupled harmonic oscillators; 2) the
identification of some classes of coupled nonlinear oscillators showing
this oscillatory dynamics and 3) the capability of some numerical
integrators - thought as discrete dynamical systems - for reproducing the
infinitely many zeros of coupled harmonic oscillators driven by random forces.
\end{abstract}

\section{Introduction}

Motivated by their capability to describe the time evolution of complex random
phenomena, models of nonlinear oscillators driven by random forces have become
a focus of intensive studies (see, e.g., \cite{Gitterman05},
\cite{Anishchenko13}, \cite{Stratonovich67}, \cite{Kliemann95}, \cite{Mao07}).
Naturally, the added noise modifies the dynamics of the deterministic
oscillators and so new distinctive dynamical features arise in these random
systems. Since the complexity of the random dynamics depends on the type of
nonlinearity and the level of noise, many of the results on this matter have
been achieved for specific classes of stochastic oscillators. In particular, a
number of properties have been studied for the simple harmonic oscillator such
as the stationary probability distribution, the linear growth of energy along
the paths, the oscillation of the solution, and the symplectic structure of
Hamiltonian oscillators, among others (see, e.g., \cite{Markus88},
\cite{Bismut81}, \cite{Markus93}, \cite{Schurz 09}, \cite{Schurz08}). Some of
these properties have been also analyzed for coupled harmonic oscillators.
However, to the best of our knowledge, there are no studies concerning the
oscillatory behavior around the origin of stochastic coupled oscillators.

On the other hand, demanded by an increasing number of practical applications
(see e.g., \cite{Ghorbanian15}, \cite{Anishchenko13}, \cite{Ghorbanian15b},
and references therein), the numerical simulation of stochastic oscillators
has also a high interest. In particular, it is required to use specialized
numerical integrators that preserve the dynamics of the oscillators since
general multipurpose integrators fail to achieve this target. This is so
because, in general, the dynamics of discrete dynamical systems is far richer
than that of the continuous ones. Consequently, specific oriented integrators
for stochastic oscillators have also been proposed, for instance, in
\cite{delaCruz16}, \cite{Milstein02}, \cite{Cohen12}, \cite{Tocino07},
\cite{Senosiain14}. Distinctively, in \cite{delaCruz16}, the family of the
Locally Linearized methods have been proved to simultaneously reproduce
various dynamical properties of the stochastic harmonic oscillators including
the oscillatory behavior around $0$ of the single oscillators.

In this work, we are interested in the study of the oscillatory behavior of
the stochastic coupled oscillators driven by random forces. We focus on three
main aspects: 1) the analysis of this oscillatory behavior for the case of
coupled harmonic oscillators, a property that has only been demonstrated for
simple oscillators (\cite{Markus88},\cite{Mao07}); 2) the identification of
some classes of coupled nonlinear oscillators that display this dynamics; and
3) the capability of the Locally Linearized integrators - as discrete
dynamical systems - of reproducing the infinitely many zeros of the coupled
harmonic oscillators driven by random forces, which complements known results
of these integrators for simple harmonic oscillators \cite{delaCruz16}%
.\newline

\section{The infinitely many zeros of the coupled harmonic oscillators}

Let us first consider the undamped harmonic oscillator, defined by the
$2d$-dimensional Stochastic Differential Equation (SDE) with additive noise
\begin{equation}
d\mathbf{x}\left(  t\right)  =\mathbf{Ax}\left(  t\right)  dt+\mathbf{B}%
d\mathbf{w}_{t}, \label{harmonic oscillator}%
\end{equation}
for $t\geq t_{0}\geq0$, with initial condition $\mathbf{x}(t_{0})=(x_{0}%
,y_{0})^{\top},$ $x_{0},y_{0}\in%
\mathbb{R}
^{d}$ and $d>1$. Here,
\[
\mathbf{x}(t)=%
\begin{bmatrix}
x(t)\\
y(t)
\end{bmatrix}
,\text{ \ \ \ \ \ \ \ \ }\mathbf{A}=%
\begin{bmatrix}
\mathbf{0} & \mathbf{I}\\
-\Lambda^{2} & \mathbf{0}%
\end{bmatrix}
,\text{ \ \ \ \ and\qquad}\mathbf{B}=%
\begin{bmatrix}
\mathbf{0}\\
\Pi
\end{bmatrix}
,
\]
being $\Lambda\in%
\mathbb{R}
^{d\times d}$ a nonsingular symmetric matrix, $\Pi\in%
\mathbb{R}
^{d\times m}$ a matrix, $\mathbf{I}$ the $d-$dimensional identity matrix, and
$\mathbf{w}_{t}$ an $m$-dimensional standard Wiener process on the filtered
complete probability space $\left(  \Omega,\mathfrak{F,}\left(  \mathfrak{F}%
_{t}\right)  _{t\geq t_{0}},\mathbb{P}\right)  $.

In what follows, the symbol $\left\langle \cdot,\cdot\right\rangle $ denotes
the Euclidean scalar product associated to the Euclidean vector norm
$\left\vert \cdot\right\vert $. For matrices, $\left\vert \cdot\right\vert $
denotes the Frobenious matrix norm. In addition, the following lemma will be useful.

\begin{lemma}
\label{Lemma1}Let $\eta_{1},...,\eta_{n},...$ be independent $\mathcal{N}%
(0,1)$ distributed random variables. Let $\{\sigma_{nr}\}$ be a bounded
triangular array of real numbers. Set $S_{n}=$ $%
{\textstyle\sum\limits_{r=1}^{n}}
\sigma_{nr}\eta_{r}$ and $s_{n}^{2}=%
{\textstyle\sum\limits_{r=1}^{n}}
\sigma_{nr}^{2}.$ If $\underset{n\rightarrow\infty}{\lim\inf}\frac{s_{n}^{2}%
}{n}>0,$ then%
\begin{equation}
\mathcal{P}\left(  \underset{n\rightarrow\infty}{\lim\sup}\frac{S_{n}}%
{\sqrt{2s_{n}^{2}\log\log s_{n}^{2}}}\geq1\right)  =1, \label{LIL1}%
\end{equation}
and%
\begin{equation}
\mathcal{P}\left(  \underset{n\rightarrow\infty}{\lim\inf}\frac{S_{n}}%
{\sqrt{2s_{n}^{2}\log\log s_{n}^{2}}}\leq-1\right)  =1. \label{LIL2}%
\end{equation}

\end{lemma}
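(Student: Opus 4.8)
The plan is to treat $S_n$ as the centered Gaussian $\mathcal{N}(0,s_n^2)$ that it is, and to reduce the statement to a Borel--Cantelli argument on independent events. The difficulty is that (LIL1) is not a classical law of the iterated logarithm: because $\{\sigma_{nr}\}$ is a triangular array, all the $S_n$ are built from the same underlying innovations $\eta_1,\eta_2,\dots$, so neither the increment structure of the Hartman--Wintner LIL nor a direct application of the second Borel--Cantelli lemma to the events $\{S_n\ge(1-\varepsilon)\sqrt{2s_n^2\log\log s_n^2}\}$ is available. I set $M:=\sup_{n,r}|\sigma_{nr}|<\infty$ and $\delta:=\liminf_n s_n^2/n>0$, so that $\delta n\le s_n^2\le M^2 n$ for all large $n$; in particular $s_n^2\to\infty$ and $\log s_n^2\asymp\log n$. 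It suffices to prove (LIL1), since (LIL2) follows by applying it to $-\eta_r$ (same law). Moreover it is enough to show, for each fixed $\theta\in(0,1)$, that $\limsup_n S_n/\sqrt{2s_n^2\log\log s_n^2}\ge\theta$ almost surely; intersecting these events over $\theta=1-1/j$, $j\ge1$, then yields (LIL1).

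Fix $\theta\in(0,1)$ and choose $\gamma>0$ with $(1+\gamma)\theta^2<1$. First I would pass to the sparse subsequence $n_k:=\lceil 2^{k^{1+\gamma}}\rceil$ and split
\[
S_{n_k}=R_k+U_k,\qquad R_k=\sum_{r=1}^{n_{k-1}}\sigma_{n_k r}\eta_r,\qquad U_k=\sum_{r=n_{k-1}+1}^{n_k}\sigma_{n_k r}\eta_r .
\]
Because the blocks $(n_{k-1},n_k]$ are disjoint, the \emph{fresh} parts $U_k$ are mutually independent, with $U_k\sim\mathcal{N}(0,v_k^2)$ and $v_k^2=s_{n_k}^2-\rho_k^2$, where $\rho_k^2=\mathrm{Var}(R_k)\le M^2 n_{k-1}$. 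Since $n_k$ grows super-linearly, $\rho_k^2\le M^2 n_{k-1}=o(n_k)=o(s_{n_k}^2)$, so $v_k^2/s_{n_k}^2\to1$: the fresh block asymptotically carries the whole variance.

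Then I would run two Borel--Cantelli estimates. For the old part, the Gaussian upper tail and $\rho_k^2/s_{n_k}^2\to0$ (indeed faster than any power of $k$, by the choice of $n_k$) make $\sum_k\mathcal{P}\bigl(|R_k|\ge\varepsilon\sqrt{s_{n_k}^2\log\log s_{n_k}^2}\bigr)$ finite for every $\varepsilon>0$, so the first Borel--Cantelli lemma gives $R_k/\sqrt{2s_{n_k}^2\log\log s_{n_k}^2}\to0$ a.s. For the fresh part, consider the independent events $A_k=\{U_k\ge\theta\sqrt{2s_{n_k}^2\log\log s_{n_k}^2}\}$. Writing $U_k=v_kZ_k$ with $Z_k\sim\mathcal{N}(0,1)$ and using $v_k/s_{n_k}\to1$ together with the Gaussian lower tail $\mathcal{P}(Z\ge x)\ge cx^{-1}e^{-x^2/2}$, one gets, since $\log s_{n_k}^2\asymp k^{1+\gamma}$,
\[
\mathcal{P}(A_k)\gtrsim (\log s_{n_k}^2)^{-\theta^2(1+o(1))}(\log\log s_{n_k}^2)^{-1/2}\asymp k^{-(1+\gamma)\theta^2(1+o(1))}(\log k)^{-1/2}.
\]
As $(1+\gamma)\theta^2<1$, this series diverges, so by the second Borel--Cantelli lemma $A_k$ occurs for infinitely many $k$ a.s.

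Combining the two facts on $S_{n_k}=R_k+U_k$ gives $\limsup_k S_{n_k}/\sqrt{2s_{n_k}^2\log\log s_{n_k}^2}\ge\theta$ a.s., and since the full-sequence limsup dominates the subsequence one, this proves the reduced claim for the fixed $\theta$; the countable intersection over $\theta=1-1/j$ then delivers (LIL1). The hard part is the choice of growth rate for $n_k$: capturing the full variance in the fresh block (needed to reach the constant $1$ rather than a smaller one) forces $n_{k-1}/n_k\to0$, i.e.\ fast growth, whereas keeping $\sum_k\mathcal{P}(A_k)$ divergent (needed for Borel--Cantelli II) forces slow growth so that $\log\log s_{n_k}^2$ stays comparable to $\log k$. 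The exponent $1+\gamma$ with $\gamma\downarrow0$ is exactly what reconciles these opposing demands, the price being the harmless loss from $\theta<1$ to $\theta\uparrow1$.
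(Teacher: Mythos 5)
Your proposal is correct, but it takes a genuinely different route from the paper: the paper's entire proof is a one-line citation to Corollary 1 of Theorem 2 in Tomkins (1974), which is a general law of the iterated logarithm for double sequences, whereas you construct a self-contained argument from scratch. Your key observations are the right ones: a direct second Borel--Cantelli application to $\{S_n\geq(1-\varepsilon)\sqrt{2s_n^2\log\log s_n^2}\}$ fails because all the $S_n$ share the innovations $\eta_1,\eta_2,\dots$, so you sparsify to $n_k=\lceil 2^{k^{1+\gamma}}\rceil$ and split $S_{n_k}=R_k+U_k$ with the fresh blocks $U_k$ independent and asymptotically carrying the full variance (since $\rho_k^2\leq M^2 n_{k-1}=o(n_k)=o(s_{n_k}^2)$ by $\liminf s_n^2/n>0$ and boundedness of the array). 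The quantitative choices check out: the Gaussian upper tail makes $\sum_k\mathcal{P}(|R_k|\geq\varepsilon\sqrt{s_{n_k}^2\log\log s_{n_k}^2})$ converge because $s_{n_k}^2/\rho_k^2$ grows superpolynomially in $k$; the Gaussian lower tail together with $\log s_{n_k}^2\asymp k^{1+\gamma}$ gives $\mathcal{P}(A_k)\gtrsim k^{-(1+\gamma)\theta^2(1+o(1))}(\log k)^{-1/2}$, which is a divergent series precisely because you imposed $(1+\gamma)\theta^2<1$; and the passage from $\theta<1$ to the constant $1$ via a countable intersection, plus the symmetry $\eta_r\mapsto-\eta_r$ for (LIL2), are both sound. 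What the paper's approach buys is brevity and generality (Tomkins's result does not require Gaussianity); what yours buys is a transparent, elementary proof that leans on exact Gaussian tail asymptotics and makes visible exactly where the hypotheses $\sup_{n,r}|\sigma_{nr}|<\infty$ and $\liminf_n s_n^2/n>0$ are used. The only things left implicit are routine (e.g.\ $v_k^2>0$ for large $k$, and replacing the $(1+o(1))$ exponent by a fixed $a'<1$ before summing), so I see no genuine gap.
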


\begin{proof}
This is a direct consequence of Corollary $1$ of Theorem $2$ in
\cite{Tomkins74}.
\end{proof}

The following theorem shows the infinitely many oscillations of the paths of
coupled harmonic oscillators (\ref{harmonic oscillator}), which extends the
Theorem $4.1$ in \cite{Mao07} (Section $8.4$) that refers to the paths of
simple harmonic oscillators (i.e., those defined by (\ref{harmonic oscillator}%
) with $d=1$).

\begin{theorem}
\label{Teor1}Consider the coupled harmonic oscillator
(\ref{harmonic oscillator}). Then, almost surely, each component of the
solution $\mathbf{x}(t)$ has infinitely many zeros on $[t_{0}$ $\infty)$ for
every $t_{0}\geq0.$
\end{theorem}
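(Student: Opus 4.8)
The plan is to reduce the statement to a one-dimensional oscillation result for each scalar coordinate and then invoke Lemma~\ref{Lemma1}. First I would write the solution of (\ref{harmonic oscillator}) explicitly through the fundamental matrix. Because $\mathbf{A}$ has the Hamiltonian block form with symmetric $\Lambda$, its exponential is $e^{\mathbf{A}u}=\begin{bmatrix}\cos(\Lambda u) & \Lambda^{-1}\sin(\Lambda u)\\ -\Lambda\sin(\Lambda u) & \cos(\Lambda u)\end{bmatrix}$, the trigonometric functions being defined through the spectral calculus of $\Lambda$. Hence each coordinate of $\mathbf{x}(t)$ equals a deterministic term (a bounded, almost periodic function of $t$, since $\cos(\Lambda\cdot)$ and $\Lambda^{\pm1}\sin(\Lambda\cdot)$ are bounded) plus an It\^o integral of a deterministic kernel against $\mathbf{w}$. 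In particular, for the $k$-th position coordinate, $x_k(t)=(\text{bounded})+\int_{t_0}^{t} e_k^{\top}\Lambda^{-1}\sin(\Lambda(t-s))\,\Pi\, d\mathbf{w}_s$, and similarly for the velocities. Since this is a Gaussian process with a.s. continuous paths, by the intermediate value theorem it suffices to prove that each coordinate $\phi(t)$ satisfies $\limsup_{t\to\infty}\phi(t)=+\infty$ and $\liminf_{t\to\infty}\phi(t)=-\infty$ almost surely; this forces infinitely many sign changes, hence infinitely many zeros on $[t_0,\infty)$.

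To obtain the $\limsup/\liminf$ I would sample the path at a deterministic sequence $t_n=t_0+nh$ and turn the It\^o integral into a sum over the subintervals $[t_{r-1},t_r]$. Writing $G_k(u)=e_k^{\top}\Lambda^{-1}\sin(\Lambda u)\Pi$ and expanding $\sin(\Lambda(t_n-s))$ by the addition formula, the contribution of the $r$-th subinterval becomes a mean-zero Gaussian whose coefficients are built from $\sin(\lambda_i(n-r+1)h)$ and $\cos(\lambda_i(n-r+1)h)$ (where $\lambda_i$ are the eigenvalues of $\Lambda$) times the fixed numbers $Q_{ki}/\lambda_i$ and the within-interval increments of $\mathbf{w}$. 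These coefficients are uniformly bounded, so the stochastic part of $\phi(t_n)$ takes the form of a bounded triangular array acting on independent Gaussian increments, which is exactly the setting of Lemma~\ref{Lemma1}. In the single-oscillator case ($d=1$) this specializes to the argument of \cite{Mao07}: choosing $h=\pi/|\lambda|$ collapses $\sin(\lambda(t_n-s))$ to $\pm\sin(\lambda s)$, the integral becomes a genuine random walk $\pm\sum_{r=1}^{n}\eta_r$ with $\eta_r$ i.i.d.\ Gaussian, and the classical law of the iterated logarithm applies.

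With the array in hand I would identify $s_n^2$ with the variance of the stochastic part of $\phi(t_n)$ and verify the hypothesis $\liminf_n s_n^2/n>0$. This amounts to the linear-in-time growth of the coordinate's variance: diagonalizing $\Lambda=Q\,\mathrm{diag}(\lambda_i)\,Q^{\top}$ and integrating, $\mathrm{Var}(\phi(t))=c_k\,t+O(1)$ with $c_k=\tfrac12\big(\sum_i (Q_{ki}/\lambda_i)^2\,(Q^{\top}\Pi\Pi^{\top}Q)_{ii}\big)$, the oscillatory cross-terms between distinct frequencies contributing only bounded integrals. Provided $c_k>0$ one gets $s_n^2\sim c_k h\,n$, so $s_n^2\to\infty$ and $s_n^2/n\to c_k h>0$. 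Lemma~\ref{Lemma1} then yields $\limsup_n S_n/\sqrt{2s_n^2\log\log s_n^2}\ge1$ and $\liminf_n\le-1$ almost surely; since $s_n^2\to\infty$ and the deterministic part of $\phi$ stays bounded, this gives $\phi(t_n)\to+\infty$ and $\phi(t_n)\to-\infty$ along subsequences, settling that coordinate. Repeating for every position and velocity coordinate finishes the proof.

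The main obstacle is the coupled, multi-frequency nature of the problem, which enters in two places. First, casting the sampled stochastic integral into the exact scalar form $S_n=\sum_r\sigma_{nr}\eta_r$ of Lemma~\ref{Lemma1} is delicate when $d>1$: unlike the single-frequency case, no sampling step $h$ simultaneously aligns all the phases $\lambda_i(t_n-s)$, so the per-interval randomness is genuinely multidimensional and one must organize the eigenmode contributions into a single bounded triangular array without destroying the independence across subintervals. Second, and more substantively, one must secure the strict positivity $c_k>0$ for every coordinate, i.e.\ that the coupling and the geometry of $\Pi$ never let the eigenmodes interfere destructively so as to keep some coordinate's fluctuations bounded; this needs a nondegeneracy argument on $\Pi$ ensuring that the noise reaches each mode, and checking it uniformly over all $2d$ coordinates is where the real work lies. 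The remaining steps are then routine applications of the spectral calculus and of Lemma~\ref{Lemma1}.
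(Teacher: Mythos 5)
Your proposal reproduces the paper's argument essentially step for step: the explicit variation-of-constants representation with the block $\cos/\sin$ fundamental matrix, the splitting of each coordinate into a bounded (almost periodic) deterministic part plus a Gaussian stochastic integral, sampling at $t_n=t_0+n\Delta$, decomposing the integral into independent per-subinterval Gaussian contributions, applying Lemma~\ref{Lemma1} after checking $\liminf_n s_n^2/n>0$ via the boundedness of the cross-frequency terms, and concluding by continuity of the paths. So the route is the same; what remains is to assess the two ``obstacles'' you single out as the real work.

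The first one is not an obstacle. For a fixed scalar coordinate, the contribution of the $r$-th subinterval is already a \emph{scalar} mean-zero Gaussian $V_{nr}$ (an It\^{o} integral of a deterministic $\mathbb{R}^{m}$-valued kernel over $[t_0+(r-1)\Delta,\,t_0+r\Delta]$), and independence over $r$ is automatic from the disjointness of the intervals; no alignment of the phases $\lambda_j(t_n-s)$ across eigenmodes is needed. One simply computes $\sigma_{nr}^2=\mathbb{E}(V_{nr}^2)$ by the It\^{o} isometry, which is exactly what the paper does. (The only genuine subtlety here, which both you and the paper pass over, is that the normalized variables $V_{nr}/\sigma_{nr}$ depend on $n$ as well as $r$ when $d>1$, so one really needs the double-sequence form of Tomkins' theorem rather than the fixed-sequence form in which Lemma~\ref{Lemma1} is stated.) Your second obstacle, by contrast, is real: the limit $\lim_n s_n^2/n=\tfrac{\Delta}{2}\sum_{l,k}(c_k^l)^2$ with $c_k^l=P_{1k}\lambda_k^{-1}\langle P_k,\Pi_l\rangle$ is strictly positive only under a nondegeneracy condition on $\Pi$ (it vanishes, for instance, if every column of $\Pi$ is orthogonal to each eigenvector $P_k$ with $P_{1k}\neq0$), and in that degenerate case the law of the iterated logarithm gives nothing and one must fall back on the oscillation of the deterministic almost periodic part. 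The paper asserts this positivity without comment, so you are not behind it on this point; but a complete proof should either impose the nondegeneracy explicitly or treat the degenerate coordinates separately. Note that the reduction to distinct positive eigenvalues (grouping equal $|\lambda_k|$) performed in the paper handles frequency degeneracy only, not this degeneracy of $\Pi$.
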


\begin{proof}
Let us start considering the first component $x^{1}$ of the solution of
(\ref{harmonic oscillator}). By the spectral theorem for the real symmetric
matrix $\Lambda$ we have the factorization%
\[
\Lambda=Pdiag[\lambda_{1},\ldots,\lambda_{d}]P^{\intercal},
\]
where $\lambda_{1},\ldots,\lambda_{d}$ are the eigenvalues of $\Lambda$, and
$P$ is a real orthogonal matrix with entries $[P_{k,j}]$ for $k,j=1,\ldots,d$.
Then, for $f(\Lambda)=\sin(\Lambda)$ and for $f(\Lambda)=\cos(\Lambda)$, we
have (see, e.g., \cite{Higham08})%
\[
f(\Lambda)=Pdiag[f(\lambda_{1}),\ldots,f(\lambda_{d})]P^{\intercal}.
\]
Since the solution of (\ref{harmonic oscillator}) satisfies (see, e.g.,
\cite{Mao07})%
\begin{align*}
\left[
\begin{array}
[c]{c}%
x\left(  t\right) \\
y\left(  t\right)
\end{array}
\right]   &  =\left[
\begin{array}
[c]{cc}%
\text{cos}(\Lambda(t-t_{0})) & \Lambda^{-1}\text{sin}(\Lambda(t-t_{0}))\\
-\Lambda\text{sin}(\Lambda(t-t_{0})) & \text{cos}(\Lambda(t-t_{0}))
\end{array}
\right]  \left[
\begin{array}
[c]{c}%
x_{0}\\
y_{0}%
\end{array}
\right] \\
&  +%
{\displaystyle\int\limits_{t_{0}}^{t}}
\begin{bmatrix}
\Lambda^{-1}\sin(\Lambda\left(  t-s\right)  )\\
\cos(\Lambda\left(  t-s\right)  )
\end{bmatrix}
\Pi d\mathbf{w}_{s},
\end{align*}
then%
\begin{equation}
x^{1}(t)=D(t)+V(t), \label{s1}%
\end{equation}
where%
\[
D(t)=\sum_{k=1}^{d}\left(  P_{1k}\cos(\lambda_{k}(t-t_{0}))\left\langle
P_{k},x_{0}\right\rangle +P_{1k}\lambda_{k}^{-1}\sin(\lambda_{k}%
(t-t_{0}))\left\langle P_{k},y_{0}\right\rangle \right)  ,
\]
and%
\begin{equation}
V(t)=%
{\textstyle\sum\limits_{l=1}^{m}}
\left(
{\textstyle\int\limits_{t_{0}}^{t}}
\left(
{\textstyle\sum\limits_{k=1}^{d}}
c_{k}^{l}\sin\left(  \lambda_{k}\left(  t-s\right)  \right)  \right)
d\mathbf{w}_{s}^{l}\right)  , \label{V_(t)}%
\end{equation}
being $c_{k}^{l}=P_{1k}\lambda_{k}^{-1}\left\langle P_{k},\Pi_{l}\right\rangle
$, and $P_{k},$ $\Pi_{l}$ the column vectors of $P$ and $\Pi$, respectively.

Without loss of generality, let us assume that $\lambda_{k}>0$ and
$\lambda_{k}\neq$ $\lambda_{r}$ for all $k\neq r$ with $k,r=1,\ldots,d$.
Indeed, when there are only $d^{\ast}<d$ different values $\lambda_{j}^{\ast}$
of $\left\vert \lambda_{k}\right\vert $, $k=1,\ldots,d$ and $j=1,\ldots
,d^{\ast}$, the expression (\ref{V_(t)}) can be rewritten as%
\[
V(t)=%
{\textstyle\sum\limits_{l=1}^{m}}
\left(
{\textstyle\int\limits_{t_{0}}^{t}}
\left(
{\textstyle\sum\limits_{j=1}^{d^{\ast}}}
e_{j}^{l}\sin\left(  \lambda_{j}^{\ast}\left(  t-s\right)  \right)  \right)
d\mathbf{w}_{s}^{l}\right)  ,
\]
where $e_{j}^{l}=%
{\displaystyle\sum\limits_{k=1}^{d}}
c_{k}^{l}\delta_{\lambda_{j}^{\ast}}^{\left\vert \lambda_{k}\right\vert
}(1_{\lambda_{k}>0}-1_{\lambda_{k}<0})$, and $\delta$ is the Kronecker delta.
For this expression of $V(t)$ the analysis below would be the same as that for
(\ref{V_(t)}) with the above mentioned assumptions on $\lambda_{k}$.

Consider an arbitrary $\Delta>0$ and the time instants $t_{n}=t_{0}+n\Delta$,
with $n=1,2,\ldots$. In addition, for all $n$, define%
\begin{equation}
S_{n}:=V(t_{n})=%
{\textstyle\sum\limits_{r=1}^{n}}
V_{nr}, \label{sum}%
\end{equation}
where $V(t_{n})$ is defined in (\ref{V_(t)}), and
\[
V_{nr}=%
{\textstyle\sum\limits_{l=1}^{m}}
\left(
{\textstyle\int\limits_{t_{0}+(r-1)\Delta}^{t_{0}+r\Delta}}
\left(
{\textstyle\sum\limits_{k=1}^{d}}
c_{k}^{l}\sin\left(  \lambda_{k}(t_{n}-s)\right)  \right)  d\mathbf{w}_{s}%
^{l}\right)  ,
\]
for all $n,r=1,2,\ldots$. Because the independence of $\mathbf{w}_{s}%
^{1},...,\mathbf{w}_{s}^{m}$ and the independence of the increments of
$\mathbf{w}_{s}^{l}$ on disjoint intervals, $\{V_{nr}\}_{r\geq1}$ defines a
double sequence of $i.i.d.$ Gaussian random variables with zero mean and
variance%
\begin{align}
\sigma_{nr}^{2}  &  =\mathbb{E}(V_{nr}^{2})\nonumber\\
&  =%
{\textstyle\sum\limits_{l=1}^{m}}
\text{ }%
{\textstyle\int\limits_{t_{0}+(r-1)\Delta}^{t_{0}+r\Delta}}
\left(
{\textstyle\sum\limits_{k=1}^{d}}
c_{k}^{l}\sin\left(  \lambda_{k}\left(  t_{n}-s\right)  \right)  \right)
^{2}ds. \label{Teor1.1}%
\end{align}
In this way, (\ref{sum}) can be written as
\[
S_{n}=%
{\textstyle\sum\limits_{r=1}^{n}}
\sigma_{nr}\eta_{r},
\]
where $\eta_{1},...,\eta_{n}$ are $i.i.d.$ $\mathcal{N}(0,1)$ random
variables. Thus, the variance $s_{n}^{2}$ of $S_{n}$ satisfies%
\[
s_{n}^{2}=%
{\textstyle\sum\limits_{r=1}^{n}}
\sigma_{nr}^{2}.
\]
The expression (\ref{Teor1.1}) and the identity $sin(\theta)=\left(
\exp(i\theta)-\exp(-i\theta)\right)  /(2i)$ (where $i=\sqrt{-1}$) imply that%
\begin{align*}
{\tiny s}_{n}^{2}  &  {\tiny =}{\tiny -}\frac{1}{2}%
{\textstyle\sum\limits_{l=1}^{m}}
{\textstyle\sum\limits_{k,j=1}^{d}}
{\tiny c}_{k}^{l}{\tiny c}_{j}^{l}\operatorname{Re}\left\{  \exp(\text{$i$%
}(\lambda_{j}+\lambda_{k})t_{n})%
{\textstyle\int\limits_{t_{0}}^{t_{n}}}
\exp(-\text{$i$}(\lambda_{j}+\lambda_{k})s)ds\right. \\
&  \left.  -\exp(\text{$i$}(\lambda_{j}-\lambda_{k})t_{n})%
{\textstyle\int\limits_{t_{0}}^{t_{n}}}
\exp(-\text{$i$}(\lambda_{j}-\lambda_{k})s)ds\right\}  {\tiny ,}%
\end{align*}
where $\operatorname{Re}$ denotes the real part of a complex number. Since%
\[%
{\textstyle\int\limits_{t_{0}}^{t_{n}}}
\exp(-\text{$i$}\theta s)ds=\left\{
\begin{array}
[c]{cc}%
n\Delta & \text{if }\theta=0\text{ mod }2\pi\\
\left(  \exp(-\text{$i$}\theta t_{0})-\exp(-\text{$i$}\theta t_{n})\right)
/(\text{$i$}\theta) & \text{otherwise}%
\end{array}
\right.  ,
\]
we have%
\[
s_{n}^{2}=\frac{1}{2}%
{\textstyle\sum\limits_{l=1}^{m}}
{\textstyle\sum\limits_{k=1}^{d}}
\left(  c_{k}^{l}\right)  ^{2}n\Delta+C_{n},
\]
where $C_{n}$ is uniformly bounded for all $n$. Thus,%
\[
\underset{n\rightarrow\infty}{\lim}\frac{s_{n}^{2}}{n}=\frac{1}{2}%
{\textstyle\sum\limits_{l=1}^{m}}
{\textstyle\sum\limits_{k=1}^{d}}
\left(  c_{k}^{l}\right)  ^{2}\Delta>0.
\]
In addition, since
\begin{align*}
\sigma_{nr}^{2}  &  \leq%
{\textstyle\sum\limits_{l=1}^{m}}
{\textstyle\int\limits_{t_{0}+(r-1)\Delta}^{t_{0}+r\Delta}}
\left(
{\textstyle\sum\limits_{k=1}^{d}}
\left\vert c_{k}^{l}\right\vert \right)  ^{2}ds\\
&  \leq\Delta d%
{\textstyle\sum\limits_{l=1}^{m}}
{\textstyle\sum\limits_{k=1}^{d}}
\left\vert c_{k}^{l}\right\vert ^{2},
\end{align*}
for all $n$ and $r$, the Law of the Iterated Logarithms of Lemma \ref{Lemma1}
holds for $S_{n}$. Thus, for $0<\varepsilon<1$, (\ref{LIL1}) implies that%
\[
S_{n}>\left(  1-\varepsilon\right)  \sqrt{2s_{n}^{2}\left(  \log\log s_{n}%
^{2}\right)  }\text{ \ for infinitely many values of }n\text{ (almost
surely).}%
\]
In addition, since \ \ \
\[
\left\vert D(t_{n})\right\vert \leq\left\vert P\right\vert ^{2}(\left\vert
x_{0}\right\vert +\left\vert y_{0}\right\vert \underset{k}{\max}\left\{
\lambda_{k}^{-1}\right\}  )
\]
for all $n$, for the fist component (\ref{s1}) of the solution of
(\ref{harmonic oscillator}) we have that
\[
x^{1}(t_{n})>0\text{ infinitely often as }n\rightarrow\infty\text{ (almost
surely).}%
\]
Similarly, (\ref{LIL2}) implies that
\[
S_{n}<\left(  -1+\varepsilon\right)  \sqrt{2s_{n}^{2}\left(  \log\log
s_{n}^{2}\right)  }\text{ \ for infinitely many values of }n\text{ (almost
surely)}%
\]
for $0<\varepsilon<1$, and so%
\[
x^{1}(t_{n})<0\text{ infinitely often as }n\rightarrow\infty\text{ (almost
surely).}%
\]
Thus, since the sample path of the solution to (\ref{harmonic oscillator}) is
continuous, $x^{1}(t)$ must have, almost surely, infinitely many zeros on
$[t_{0}$ $\infty)$. For the remainder of the components of the solution of
(\ref{harmonic oscillator}) we can proceed in a similar manner. This concludes
the proof. \
\end{proof}

\section{The infinitely many zeros of coupled nonlinear oscillators}

Let us consider the coupled nonlinear oscillator defined by the $2d$%
-dimensional ($d>1$) SDE with additive noise%

\begin{equation}%
\begin{array}
[c]{l}%
dx(t)=y(t)dt,\\
dy(t)=-f(x(t),y(t))dt+\Pi d\widetilde{\mathbf{w}}_{t},
\end{array}
\label{nonlinear oscillator}%
\end{equation}
where $\Pi\in%
\mathbb{R}
^{d\times m}$ is a matrix, $\widetilde{\mathbf{w}}_{t}$ is a $m$-dimensional
standard Wiener process on a filtered complete probability space different of
that of the equation (\ref{harmonic oscillator}), and $f:\mathbb{R}^{d}%
\times\mathbb{R}^{d}\rightarrow\mathbb{R}^{d}$ is a smooth function satisfying
the linear growth condition%
\begin{equation}
\left\vert f(x,y)\right\vert \leq K_{1}(1+\left\vert x\right\vert +\left\vert
y\right\vert ), \label{linear growth}%
\end{equation}
for some positive constant $K_{1}$.

For analysis of the oscillatory behavior of (\ref{nonlinear oscillator}), next
Lemma will be useful.

\begin{lemma}
\label{Lemma2}Let $\left(  x(t),y(t)\right)  ^{\top}\in%
\mathbb{R}
^{2d}$ be the unique solution of the harmonic oscillator equation
(\ref{harmonic oscillator}) on $[0,T]$ for any $T>0$. Suppose that $\Phi
_{t}:=\phi(x(t),y(t)):%
\mathbb{R}
\rightarrow%
\mathbb{R}
^{m}$ is a function satisfying the linear growth condition
\begin{equation}
\left\vert \phi(x(t),y(t))\right\vert \leq K(1+\left\vert x(t)\right\vert
+\left\vert y(t)\right\vert ). \label{phi linear growth}%
\end{equation}
Then, there is a probabilistic measure $\widetilde{\mathbb{P}}$ on $\left(
\Omega,\mathfrak{F}\right)  $ absolutely continuous with respect to
$\mathbb{P}$ and an $m$-dimensional standard Wiener process $\widetilde
{\mathbf{w}}_{t}$ on $\left(  \Omega,\mathfrak{F,}\left(  \mathfrak{F}%
_{t}\right)  _{t\geq t_{0}},\widetilde{\mathbb{P}}\right)  $ such that
$\left(  x(t),y(t)\right)  ^{\intercal}$ is also the unique solution of the
nonlinear equation
\begin{equation}%
\begin{array}
[c]{l}%
dx(t)=y(t)dt,\\
dy(t)=\left(  -\Lambda^{2}x(t)+\Pi\Phi_{t}\right)  dt+\Pi d\widetilde
{\mathbf{w}}_{t},
\end{array}
\label{nonlinear oscillator 2}%
\end{equation}
on $[0,T]$.
\end{lemma}

\begin{proof}
Let $\mathbf{x}_{t}=\left(  x(t),y(t)\right)  ^{\intercal}$ be the solution of
the equation (\ref{harmonic oscillator}) on $[0,T]$. From the condition
(\ref{phi linear growth}) it follows that%
\[
\left\vert \Phi_{t}\right\vert ^{2}\leq C\left(  1+\left\vert \mathbf{x}%
_{t}\right\vert ^{2}\right)  ,
\]
where $C=3K^{2}$.

Since $\mathbf{x}_{t}$ is the solution of the linear SDE with additive noise
(\ref{harmonic oscillator}), $\mathbf{x}_{t}\sim\mathcal{N}_{2d}(\mu
_{t},\Sigma_{t})$ for all $t\in\lbrack0,T]$, where the mean $\mu_{t}$ and the
variance $\Sigma_{t}$ of $\mathbf{x}_{t}$ are continuous functions on $[0,T]$
(see, e.g., \cite{Arnold 1974}). Here, $\mathcal{N}_{2d}$ denotes $2d-$variate
normal distribution. The random vector $\mathbf{x}_{t}$ can be written as
$\mathbf{x}_{t}=\mu_{t}+\Sigma_{t}^{1/2}Z_{t}$, where $\Sigma_{t}^{1/2}$ is
the symmetric square root of $\Sigma_{t}$, and $Z_{t}\sim\mathcal{N}%
_{2d}(\mathbf{0},\mathbf{I})$. Therefore,
\begin{align*}
\mathbb{E}\left(  \exp\left(  \left\vert \Phi_{t}\right\vert ^{2}\right)
\right)   &  \leq\exp\left(  C\right)  \mathbb{E}\left(  \exp\left(
C\left\vert \mu_{t}+\Sigma_{t}^{1/2}Z_{t}\right\vert ^{2}\right)  \right) \\
&  \leq\exp\left(  C+2C\left\vert \mu_{t}\right\vert ^{2}\right)
\mathbb{E}\left(  \exp\left(  2C\left\vert \Sigma_{t}^{1/2}\right\vert
^{2}\left\vert Z_{t}\right\vert ^{2}\right)  \right)  .
\end{align*}

Since $\left\vert Z_{t}\right\vert ^{2}$ is a random variable that has
chi-squared distribution with $2d$ degrees of freedom, $E\left(  \exp\left(
\alpha\left\vert Z_{t}\right\vert ^{2}\right)  \right)  \leq1/\left(
1-2\alpha\right)  ^{d}$ for $\alpha<1/2$ (\cite{Johnson94}, pp. 420).
Therefore, for all $a<1/\left(  8C\max_{t\in\lbrack0,T]}\left\vert \Sigma
_{t}\right\vert \right)  $, it holds that%
\begin{align*}
\mathbb{E}\left(  \exp\left(  a\left\vert \Phi_{t}\right\vert ^{2}\right)
\right)   &  \leq\mathbb{E}\left(  \exp\left(  \frac{1}{4}\left\vert
Z_{t}\right\vert ^{2}\right)  \right)  \\
&  \leq2^{d}D,
\end{align*}
where $D=$ $\exp\left(  aC+2aC\max_{t\in\lbrack0,T]}\left\vert \mu
_{t}\right\vert ^{2}\right)  $. The proof is then completed as a direct
consequence of the Cameron-Martin-Girsanov theorem (see, e.g., \cite{Mao07},
pp. 274).
\end{proof}

Next theorem provides conditions that guarantee a link between the solutions
of the harmonic and nonlinear oscillator equations.

\begin{theorem}
\label{Teor2}Let $\left(  x(t),y(t)\right)  ^{\intercal}\in%
\mathbb{R}
^{2d}$ be the unique solution of the harmonic oscillator equation
(\ref{harmonic oscillator}) on $[0,T]$ for $T>0$. Let $\Phi_{t}:=\phi
(x(t),y(t)):%
\mathbb{R}
\rightarrow%
\mathbb{R}
^{m}$ be a function such that%
\begin{equation}
\Pi\Phi_{t}=\Lambda^{2}x(t)-f(x(t),y(t)), \label{phi def}%
\end{equation}
where the function $f$ satisfies the linear growth condition
(\ref{linear growth}). Then, there is a probabilistic measure $\widetilde
{\mathbb{P}}$ on $\left(  \Omega,\mathfrak{F}\right)  $ absolutely continuous
with respect to $\mathbb{P}$ and an $m$-dimensional standard Wiener process
$\widetilde{\mathbf{w}}_{t}$ on $\left(  \Omega,\mathfrak{F,}\left(
\mathfrak{F}_{t}\right)  _{t\geq t_{0}},\widetilde{\mathbb{P}}\right)  $ such
that $\left(  x(t),y(t)\right)  ^{\intercal}$ is also the unique solution of
the nonlinear oscillator equation (\ref{nonlinear oscillator}) on $[0,T]$.
\end{theorem}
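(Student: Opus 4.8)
The plan is to derive Theorem \ref{Teor2} directly from Lemma \ref{Lemma2}, by checking that the function $\phi$ specified through (\ref{phi def}) satisfies the hypothesis (\ref{phi linear growth}) of that lemma, and then observing that, for this particular $\Phi_t$, the auxiliary equation (\ref{nonlinear oscillator 2}) collapses onto the target equation (\ref{nonlinear oscillator}). In this way almost all of the analytic work (the exponential-moment bound and the Cameron--Martin--Girsanov change of measure) is already packaged inside Lemma \ref{Lemma2}, and what remains is a growth estimate plus an algebraic identification.

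First I would verify the linear growth bound for $\Phi_t$. From the defining relation (\ref{phi def}) and the triangle inequality,
\[
\left\vert \Pi\Phi_t\right\vert \le \left\vert \Lambda^2 x(t)\right\vert + \left\vert f(x(t),y(t))\right\vert,
\]
so that, using the linear growth condition (\ref{linear growth}) on $f$ together with the submultiplicativity estimate $\left\vert \Lambda^2 x(t)\right\vert \le \left\vert \Lambda\right\vert^2 \left\vert x(t)\right\vert$, one obtains
\[
\left\vert \Pi\Phi_t\right\vert \le \left(\left\vert \Lambda\right\vert^2 + K_1\right)\left(1 + \left\vert x(t)\right\vert + \left\vert y(t)\right\vert\right).
\]
To pass from a bound on $\left\vert \Pi\Phi_t\right\vert$ to a bound on $\left\vert \Phi_t\right\vert$ itself, I would take $\Phi_t$ to be the minimal-norm solution of (\ref{phi def}), i.e.\ $\Phi_t=\Pi^{+}\left(\Lambda^2 x(t)-f(x(t),y(t))\right)$ with $\Pi^{+}$ the Moore--Penrose pseudoinverse, giving $\left\vert \Phi_t\right\vert \le \left\vert \Pi^{+}\right\vert\left\vert \Pi\Phi_t\right\vert$ and hence (\ref{phi linear growth}) with $K=\left\vert \Pi^{+}\right\vert\left(\left\vert \Lambda\right\vert^2+K_1\right)$. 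I expect this to be the one step requiring care, since it is exactly here that the (implicit) solvability of (\ref{phi def}) enters: one must ensure that $\Lambda^2 x - f(x,y)$ lies in the range of $\Pi$, so that a well-defined $\phi$ with linear growth actually exists.

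With (\ref{phi linear growth}) in hand, Lemma \ref{Lemma2} applies verbatim: there exist a measure $\widetilde{\mathbb{P}}$ absolutely continuous with respect to $\mathbb{P}$ and a standard Wiener process $\widetilde{\mathbf{w}}_t$ under $\widetilde{\mathbb{P}}$ for which $\left(x(t),y(t)\right)^{\intercal}$ is the unique solution of (\ref{nonlinear oscillator 2}) on $[0,T]$. Substituting the defining identity (\ref{phi def}) into the drift of the $dy$ equation of (\ref{nonlinear oscillator 2}) gives
\[
-\Lambda^2 x(t)+\Pi\Phi_t=-\Lambda^2 x(t)+\left(\Lambda^2 x(t)-f(x(t),y(t))\right)=-f(x(t),y(t)),
\]
so that (\ref{nonlinear oscillator 2}) is literally the nonlinear oscillator equation (\ref{nonlinear oscillator}).

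Finally, since $f$ is smooth and satisfies the linear growth condition (\ref{linear growth}), the SDE (\ref{nonlinear oscillator}) admits a unique solution under $\widetilde{\mathbb{P}}$; the process $\left(x(t),y(t)\right)^{\intercal}$, already identified above as a solution, must therefore be that unique solution, which completes the argument. The whole proof is thus a verification of hypotheses followed by an exact cancellation, with the genuine probabilistic content inherited from Lemma \ref{Lemma2}.
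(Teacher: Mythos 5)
Your proposal is correct and follows essentially the same route as the paper: both express $\Phi_t$ through a generalized (pseudo)inverse of $\Pi$ applied to $\Lambda^2 x(t)-f(x(t),y(t))$, deduce the linear growth condition (\ref{phi linear growth}) from (\ref{linear growth}), and then invoke Lemma \ref{Lemma2}. Your version is somewhat more explicit than the paper's (you spell out the cancellation identifying (\ref{nonlinear oscillator 2}) with (\ref{nonlinear oscillator}) and flag the implicit range condition on $\Pi$), but the underlying argument is identical.
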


\begin{proof}
Since $\Phi_{t}$ solves the equation (\ref{phi def}), $\Phi_{t}=\Pi^{-}\left(
\Lambda^{2}x(t)-f(x(t),y(t))\right)  $, where the matrix $\Pi^{-}$ is a
generalized inverse of $\Pi$. This and condition (\ref{linear growth}) imply
that $\Phi_{t}$ satisfies the linear growth condition (\ref{phi linear growth}%
). Then, the assumptions of Lemma \ref{Lemma2} are fulfilled, which completes
the proof.
\end{proof}

Notice that the assumptions of Theorem \ref{Teor2} are directly satisfied in
the case, for instance, that $\Pi$ in (\ref{harmonic oscillator}) is a
nonsingular $d\times d$ matrix.

Next theorem deals with the infinite oscillations of the paths of the coupled
nonlinear oscillator (\ref{nonlinear oscillator}).

\begin{theorem}
\label{Teor3} Each component of the solution of the coupled nonlinear
oscillator (\ref{nonlinear oscillator}) has infinitely many zeros on $[t_{0}$
$\infty)$ for every $t_{0}\geq0$ almost surely.
\end{theorem}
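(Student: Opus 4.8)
The plan is to combine the two results already established, namely Theorem~\ref{Teor1} (which guarantees that every component of the harmonic oscillator (\ref{harmonic oscillator}) has infinitely many zeros on $[t_0,\infty)$ almost surely) and Theorem~\ref{Teor2} (which asserts that, under the linear growth condition (\ref{linear growth}), the harmonic solution $(x(t),y(t))^\intercal$ is also the solution of the nonlinear equation (\ref{nonlinear oscillator}) under an equivalent measure $\widetilde{\mathbb{P}}\ll\mathbb{P}$ on each finite interval $[0,T]$). The key observation is that the oscillatory property ``infinitely many zeros'' is a measurable event in the path space, and that absolute continuity of measures preserves almost-sure properties: if $A$ is an event with $\mathbb{P}(A)=1$, and $\widetilde{\mathbb{P}}\ll\mathbb{P}$, then $\widetilde{\mathbb{P}}(A^c)=0$, i.e. $\widetilde{\mathbb{P}}(A)=1$ as well.

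First I would fix an arbitrary horizon $T>0$ and let $(x(t),y(t))^\intercal$ denote the solution of the harmonic equation (\ref{harmonic oscillator}) on $[0,T]$. By Theorem~\ref{Teor2}, there exist a measure $\widetilde{\mathbb{P}}$ absolutely continuous with respect to $\mathbb{P}$ and a Wiener process $\widetilde{\mathbf{w}}_t$ such that this same process solves the nonlinear equation (\ref{nonlinear oscillator}) on $[0,T]$ under $\widetilde{\mathbb{P}}$. Next I would define, for each fixed $t_0\in[0,T)$, the event
\[
A_{t_0}=\Bigl\{\omega:\ \text{each component of } \mathbf{x}(t) \text{ has infinitely many zeros on } [t_0,T]\Bigr\}.
\]
Theorem~\ref{Teor1} gives $\mathbb{P}(A_{t_0})=1$ (strictly, Theorem~\ref{Teor1} yields infinitely many zeros on the unbounded interval $[t_0,\infty)$; the subtlety here is that on a bounded interval $[0,T]$ one cannot have infinitely many zeros of a nontrivial analytic-type path, so the event must be phrased on $[t_0,\infty)$, which forces the horizon issue addressed below). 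Using absolute continuity, I conclude $\widetilde{\mathbb{P}}(A_{t_0})=1$, so under $\widetilde{\mathbb{P}}$ the nonlinear solution also has the desired oscillatory behavior on the same interval.

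The hard part will be passing from the finite horizon $[0,T]$ to the full half-line $[t_0,\infty)$, since Lemma~\ref{Lemma2} and Theorem~\ref{Teor2} only produce the equivalent measure $\widetilde{\mathbb{P}}$ on a bounded interval, whereas the conclusion about \emph{infinitely many} zeros is inherently an asymptotic statement. The natural remedy is to take an increasing sequence of horizons $T_N\uparrow\infty$, obtain on each $[0,T_N]$ an equivalent measure $\widetilde{\mathbb{P}}_N$ under which the nonlinear solution agrees with the harmonic one, and then argue by consistency that the nonlinear solution has, for every fixed $N$, at least $N$ zeros of each component in $[t_0,T_N]$ almost surely. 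Since the solution of (\ref{nonlinear oscillator}) is pathwise unique and the restrictions of the $\widetilde{\mathbb{P}}_N$ are mutually consistent on the filtration $(\mathfrak{F}_t)$, the count of zeros on each component grows without bound almost surely, yielding infinitely many zeros on $[t_0,\infty)$.

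Finally, I would handle all $2d$ components uniformly: since Theorem~\ref{Teor1} establishes the property for \emph{each} component of the harmonic solution simultaneously on a full-measure event, and the measure change affects all components at once, the intersection of the finitely many (in fact $2d$) full-measure events remains of full measure under both $\mathbb{P}$ and $\widetilde{\mathbb{P}}$. I would then note that this concludes the argument for every $t_0\geq 0$, completing the proof.
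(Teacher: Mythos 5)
Your overall strategy --- transferring the oscillation property of the harmonic oscillator (Theorem \ref{Teor1}) to the nonlinear one via the change of measure of Theorem \ref{Teor2} --- is exactly the route the paper takes, and you correctly locate the one delicate point: Theorem \ref{Teor2} yields a measure $\widetilde{\mathbb{P}}$ absolutely continuous with respect to $\mathbb{P}$ only on a finite horizon $[0,T]$, while ``infinitely many zeros on $[t_{0},\infty)$'' is an asymptotic event not measurable with respect to any finite-horizon $\sigma$-algebra. Unfortunately the remedy you sketch does not close this gap. First, the harmonic oscillator does not have ``at least $N$ zeros of each component in $[t_{0},T_{N}]$ almost surely'' for any deterministic sequence $T_{N}$: the number of zeros in a bounded window is a nondegenerate random variable, so at best $\mathbb{P}(\text{at least }N\text{ zeros in }[t_{0},T_{N}])$ can be made close to $1$. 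Second, and more fundamentally, absolute continuity of $\widetilde{\mathbb{P}}_{N}$ with respect to $\mathbb{P}$ on $\mathfrak{F}_{T_{N}}$ does not convert ``$\mathbb{P}$-probability close to $1$'' into ``$\widetilde{\mathbb{P}}_{N}$-probability close to $1$'': without uniform integrability of the Radon--Nikodym densities as $T_{N}\rightarrow\infty$ (which fails here), events of vanishing $\mathbb{P}$-probability can carry essentially all of the $\widetilde{\mathbb{P}}_{N}$-mass. The standard cautionary example is Brownian motion versus Brownian motion with constant drift: mutually absolutely continuous on every $[0,T]$, yet one returns to zero infinitely often and the other does not.

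This is not a pedantic objection, because the same phenomenon occurs inside the class covered by the statement. Take $d=2$, $m=2$, $\Pi=\mathbf{I}$ and $f\equiv(1,0)^{\intercal}$, which is smooth and satisfies (\ref{linear growth}). Then $y^{1}(t)=y^{1}(0)-t+\widetilde{\mathbf{w}}_{t}^{1}$ and $x^{1}(t)=x^{1}(0)+y^{1}(0)t-t^{2}/2+\int_{0}^{t}\widetilde{\mathbf{w}}_{s}^{1}\,ds$; since the integrated Wiener process grows only like $t^{3/2}(\log\log t)^{1/2}$ by the law of the iterated logarithm, $x^{1}(t)\rightarrow-\infty$ almost surely and has finitely many zeros. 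Hence no argument based solely on finite-horizon equivalence of measures can succeed without further hypotheses on $f$ (for instance, conditions forcing the Girsanov densities to remain uniformly controlled, or a direct asymptotic analysis of the nonlinear dynamics). For what it is worth, the paper's own proof of Theorem \ref{Teor3} is a two-sentence appeal to Theorems \ref{Teor1} and \ref{Teor2} that silently commits exactly the step you flagged as ``the hard part''; your write-up is more candid about where the difficulty sits, but the consistency argument you propose to resolve it does not work.
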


\begin{proof}
Theorem \ref{Teor2} states that, for properties holding almost surely, the
analysis of the nonlinear oscillator (\ref{nonlinear oscillator}) with growth
condition (\ref{linear growth}) reduces to that of the harmonic oscillator
(\ref{harmonic oscillator}). In this way, since by Theorem \ref{Teor1} each
component of the harmonic oscillator (\ref{harmonic oscillator}) has
infinitely many zeros on $[t_{0}$ $\infty)$, each component of the nonlinear
oscillator (\ref{harmonic oscillator}) will also has infinitely many zeros on
$[t_{0}$ $\infty)$ for every $t_{0}\geq0$.
\end{proof}

As example of equation (\ref{nonlinear oscillator}), with condition
(\ref{linear growth}) being satisfied, we can mention the equation of various
type of coupled pendulums driven by random forces, as those of
\cite{Meirovitch86}: e.g., 1) a \textit{double pendulum} (a pendulum with
another pendulum attached to its end); and 2) a pair of identical pendulums
connected by a weak spring. For the last one, we have the equation%
\begin{align*}
dx_{1}\left(  t\right)   &  =y_{1}\left(  t\right)  dt,\\
dy_{1}\left(  t\right)   &  =-\alpha\sin\left(  x_{1}\left(  t\right)
\right)  -\beta\left(  \sin\left(  x_{1}\left(  t\right)  \right)
-\sin\left(  x_{2}\left(  t\right)  \right)  \right)  \cos\left(  x_{1}\left(
t\right)  \right)  dt+\sigma_{1}d\mathbf{w}_{t}^{1},\\
dx_{2}\left(  t\right)   &  =y_{2}\left(  t\right)  dt,\\
dy_{2}\left(  t\right)   &  =-\alpha\sin\left(  x_{2}\left(  t\right)
\right)  +\beta\left(  \sin\left(  x_{1}\left(  t\right)  \right)
-\sin\left(  x_{2}\left(  t\right)  \right)  \right)  \cos\left(  x_{2}\left(
t\right)  \right)  dt+\sigma_{2}d\mathbf{w}_{t}^{2},
\end{align*}
with $\alpha,$ $\beta,$ $\sigma_{1},$ $\sigma_{2}$ $\in%
\mathbb{R}
^{+}$.

Clearly for this equation there is a function $\Phi_{t}$ satisfying
(\ref{phi def}). Thus, by Theorem \ref{Teor3}, each component of this
nonlinear equation has infinitely many zeros on $[t_{0}$ $\infty)$ for every
$t_{0}\geq0$ almost surely.

\section{Simplicity of the zeros}

This section deals with the simplicity of the zeros of coupled harmonic and
coupled nonlinear stochastic oscillators considered in previous sections. That
is, we will prove that the component $y^{i}$ of the oscillators does not
vanish at the same time that the component $x^{i}$ does.

\begin{theorem}
\label{Teor4}The infinite many zeros of the coupled harmonic oscillator
(\ref{harmonic oscillator}) are simple.
\end{theorem}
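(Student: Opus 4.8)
The statement to prove is that, almost surely, for each component $i=1,\dots,d$ the pair $(x^i(t),y^i(t))$ is never equal to $(0,0)$ for any $t\ge t_0$; since $\dot x^i=y^i$, this is precisely the assertion that every zero of $x^i$ is simple. Because there are finitely many components and $[t_0,\infty)$ is a countable union of compact intervals, it suffices to fix one index $i$ and one interval $[t_0+\delta,T]$ with $0<\delta<T-t_0$, and to show that almost surely there is no $\tau\in[t_0+\delta,T]$ with $x^i(\tau)=y^i(\tau)=0$; the cases $\delta\downarrow0$ and $T\uparrow\infty$ then follow by monotone limits. I would first note that, directly from the first equation $dx^i=y^i\,dt$ of (\ref{harmonic oscillator}), one has $x^i(t)=x^i_0+\int_{t_0}^t y^i(u)\,du$ with $y^i$ a.s.\ continuous, so $x^i$ is of class $C^1$ with $(x^i)'=y^i$; hence a zero $\tau$ of $x^i$ is non-simple exactly when $y^i(\tau)=0$ as well.

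The analytic input is the nondegeneracy of the law of $(x^i(t),y^i(t))$. For $t>t_0$ this is a bivariate Gaussian vector (with deterministic mean) whose covariance matrix $\Sigma^i_t$ is the Gram matrix, in $L^2([0,t-t_0];\mathbb{R}^m)$, of the two row-valued kernels $u\mapsto[\Lambda^{-1}\sin(\Lambda u)\Pi]_{i\cdot}$ and $u\mapsto[\cos(\Lambda u)\Pi]_{i\cdot}$ appearing in the explicit solution used in the proof of Theorem \ref{Teor1}. These kernels are linearly independent---at $u=0$ the first vanishes while the second equals $\Pi_{i\cdot}$---so $\Sigma^i_t$ is positive definite, which is the same kind of nondegeneracy already underlying Theorem \ref{Teor1} and the use of $\Sigma_t^{1/2}$ in Lemma \ref{Lemma2}. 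Consequently $(x^i(t),y^i(t))$ has a Gaussian density bounded by $1/(2\pi\sqrt{\det\Sigma^i_t})$, and since $\det\Sigma^i_t$ is continuous and strictly positive on the compact set $[t_0+\delta,T]$, this density is bounded by a constant $M_\delta$ uniformly in $t\in[t_0+\delta,T]$ and in the space variable.

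With this bound in hand I would run the standard covering argument (Bulinskaya's lemma). Partition $[t_0+\delta,T]$ into $O(1/h)$ subintervals $I_j$ of length $h$ with left endpoints $t_j$. If a double zero $\tau$ lies in $I_j$, then $|y^i(t_j)|=|y^i(t_j)-y^i(\tau)|$ is at most the oscillation of $y^i$ on $I_j$, while $|x^i(t_j)|=|\int_\tau^{t_j}y^i|\le h\sup_{[t_0+\delta,T]}|y^i|$ because $(x^i)'=y^i$. On the high-probability event where $\sup|y^i|\le M$ and the Gaussian (L\'evy-type) modulus of continuity of $y^i$ is at most $C\sqrt{h\log(1/h)}$, the occurrence of a double zero in $I_j$ forces $(x^i(t_j),y^i(t_j))$ into a rectangle of area $O(h\cdot\sqrt{h\log(1/h)})$; the uniform density bound then gives $\mathbb{P}(\exists\,\tau\in I_j)\le M_\delta\,O(h^{3/2}\sqrt{\log(1/h)})$. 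Summing over the $O(1/h)$ intervals yields a total bound $O(\sqrt{h\log(1/h)})\to0$, and adding the vanishing probability of the complementary event lets $h\downarrow0$ to conclude that there is almost surely no double zero on $[t_0+\delta,T]$.

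The main obstacle is not the combinatorics but the two uniform estimates feeding it: verifying that $\Sigma^i_t$ stays nondegenerate, and hence the density bounded, away from $t_0$---which is exactly where the kernel independence above is used---and controlling the \emph{random} oscillation of the rough component $y^i$, whose increments over $I_j$ are of order $\sqrt h$ rather than $h$. Both are handled by confining the analysis to a good event of probability tending to $1$ through the Gaussian modulus of continuity, and the degeneration of $\Sigma^i_t$ as $t\downarrow t_0$ is sidestepped precisely by the reduction to $[t_0+\delta,T]$ together with the final limit $\delta\downarrow0$. Once the harmonic case is settled, the analogous statement for the nonlinear oscillator (\ref{nonlinear oscillator}) follows, as in Theorem \ref{Teor3}, from the absolute continuity of measures provided by Theorem \ref{Teor2}, since simplicity of the zeros is an almost-sure property.
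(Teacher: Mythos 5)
Your argument is correct in outline, but it proves the theorem by a genuinely different route than the paper. The paper first invokes Theorem \ref{Teor2} (Girsanov) to identify the coupled solution with that of a decoupled oscillator, reducing to $d$ scalar oscillators, and then runs a Khasminskii-type non-attainability argument: it constructs a function $V(x,y)=\int_0^\infty p(s,x,y)\,ds$ from the transition density of an auxiliary linear SDE, checks that $LV=0$ so that $\mathbb{E}\,V(x(t),y(t))$ is conserved, notes that $V\to\infty$ at the origin, and concludes via the stopping times $\tau_n$ that the origin is not reached. You instead work directly on the coupled system with a first-moment covering (Bulinskaya-type) argument: a uniform bound on the Gaussian density of $(x^i(t),y^i(t))$ away from $t_0$, combined with the $C^1$ regularity of $x^i$ and the L\'evy modulus of continuity of $y^i$, forces the probability of a double zero in a window of length $h$ to be $O(h^{3/2}\sqrt{\log(1/h)})$, which is summable against the $O(1/h)$ windows. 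What your approach buys is the elimination of both the change of measure and the explicit construction of $V$ (the most computational part of the paper's proof, requiring the closed form of the auxiliary transition density and the verification of the blow-up of the integral at the origin); it is also quantitative and localizes cleanly. What the paper's approach buys is that, once $V$ is in hand, non-attainability on all of $[t_0,\infty)$ follows in one stroke with no compactness or modulus-of-continuity estimates, and the Girsanov step lets it reuse Mao's one-dimensional results almost verbatim.

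One soft spot to patch: your linear-independence argument for the kernels evaluates them at $u=0$ and uses $g(0)=\Pi_{i\cdot}$, which presumes the $i$-th row of $\Pi$ is nonzero. If $\Pi_{i\cdot}=0$ but the $i$-th component is forced only through the coupling in $\Lambda$, you should instead expand both kernels in the eigenbasis of $\Lambda$ and use the linear independence of $\{\sin(\lambda_k u),\cos(\lambda_k u)\}$ over distinct frequencies; nondegeneracy of $\Sigma^i_t$ then holds exactly when some coefficient $P_{ik}\langle P_k,\Pi_l\rangle$ is nonzero, which is the same implicit nondegeneracy condition already needed for Theorem \ref{Teor1} (and, tacitly, for the theorem itself, since a completely unforced decoupled component started at the origin violates the claim). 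This is a fixable refinement, not a gap in the method.
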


\begin{proof}
First, note that Theorem \ref{Teor2} implies that there is a probabilistic
measure $\widetilde{\mathbb{P}}$ on $\left(  \Omega,\mathfrak{F}\right)  $
absolutely continuous with respect to $\mathbb{P}$ and an $m$-dimensional
standard Wiener process $\widetilde{\mathbf{w}}_{t}$ on $\left(
\Omega,\mathfrak{F,}\left(  \mathfrak{F}_{t}\right)  _{t\geq t_{0}}%
,\widetilde{\mathbb{P}}\right)  $ such that the solution $\left(
x(t),y(t)\right)  $ of the coupled harmonic oscillators
(\ref{harmonic oscillator}) is also the unique solution of the discoupled
oscillator%
\begin{equation}
d%
\begin{bmatrix}
x(t)\\
y(t)
\end{bmatrix}
=%
\begin{bmatrix}
\mathbf{0} & \mathbf{I}\\
-\mathbf{D}^{2} & \mathbf{0}%
\end{bmatrix}%
\begin{bmatrix}
x(t)\\
y(t)
\end{bmatrix}
dt+%
\begin{bmatrix}
\mathbf{0}\\
\Pi
\end{bmatrix}
d\widetilde{\mathbf{w}}_{t}, \label{discoupled harmonic oscillator}%
\end{equation}
on $[t_{0},T]$, with initial condition $\mathbf{x}(t_{0})=(x_{0},y_{0})^{\top
}$, being $\mathbf{D}\in%
\mathbb{R}
^{d\times d}$ a diagonal matrix and $\Pi$ defined as in
(\ref{harmonic oscillator}).

When $\mathbf{w}_{t}$ in (\ref{harmonic oscillator}) is a one-dimensional
standard Wiener process, the simplicity of the zeros of each component $x^{i}$
of (\ref{harmonic oscillator}) is a straightforward consequence of the
mentioned in the previous paragraph and Theorem 4.1, pp. 280, in \cite{Mao07}
for the simplicity of the zeros of the simple harmonic oscillator with
one-dimensional Wiener process.

When $m>1$, by following similar ideas of the proof of Theorem 3.4, pp. 277,
in \cite{Mao07}, the simplicity of the zeros of the simple harmonic oscillator%
\begin{equation}
d%
\begin{bmatrix}
x(t)\\
y(t)
\end{bmatrix}
=%
\begin{bmatrix}
0 & 1\\
-\alpha^{2} & 0
\end{bmatrix}%
\begin{bmatrix}
x(t)\\
y(t)
\end{bmatrix}
dt+%
\begin{bmatrix}
0\\%
{\textstyle\sum\limits_{j=1}^{m}}
\sigma_{j}d\widetilde{\mathbf{w}}_{t}^{j}%
\end{bmatrix}
, \label{Linear_Oscillator_dim2}%
\end{equation}
with $\alpha\in%
\mathbb{R}
,$ can be proved as follows.

We will first ensure the existence of a function $V(x,y)>0$ such
that%
\begin{equation}
\lim_{\left\vert x\right\vert +\left\vert y\right\vert \rightarrow
0}V(x,y)=\infty\text{ \ \ \ and \ \ \ }\mathbb{E}\left(  V(x(t),y(t))\right)
=\mathbb{E}\left(  V(x_{0},y_{0})\right)  \text{ for all }t\geq t_{0}\text{.}
\label{conditions for V}%
\end{equation}
From the It\^{o}-formula%
\begin{equation}
V(x(t),y(t))=V(x_{0},y_{0})+%
{\textstyle\int\limits_{t_{0}}^{t}}
LV(x_{s},y_{s})ds+%
{\textstyle\sum\limits_{j=1}^{m}}
{\textstyle\int\limits_{t_{0}}^{t}}
\frac{\partial V}{\partial y}(x_{s},y_{s})\sigma_{j}d\widetilde{\mathbf{w}%
}_{s}^{j}, \label{Ito_for_V}%
\end{equation}
with the operator%
\[
L=y\frac{\partial}{\partial x}-\alpha^{2}x\frac{\partial}{\partial y}+\frac
{1}{2}\left(
{\textstyle\sum\limits_{j=1}^{m}}
\left(  \sigma_{j}\right)  ^{2}\right)  \frac{\partial^{2}}{\partial^{2}y},
\]
it is easy to check that if $LV(x,y)=0$ then $\mathbb{E}\left(
V(x(t),y(t))\right)  =\mathbb{E}\left(  V(x_{0},y_{0})\right)  $. In addition,
note that the operator $L$ defines the Forward-Kolmogorov (Fokker-Planck)
equation%
\begin{equation}
Lp(t,x,y)=\frac{\partial p(t,x,y)}{\partial t}, \label{FPE}%
\end{equation}
for the transition probability function $p$ corresponding to the solution of
the linear SDE%
\[
d%
\begin{bmatrix}
x(t)\\
y(t)
\end{bmatrix}
=%
\begin{bmatrix}
0 & -1\\
\alpha^{2} & 0
\end{bmatrix}%
\begin{bmatrix}
x(t)\\
y(t)
\end{bmatrix}
dt+%
\begin{bmatrix}
0\\
\rho d\bar{W}_{t}%
\end{bmatrix}
,
\]
for some scalar Wiener process $\bar{W}_{t}$ with $\rho=%
{\textstyle\sum\limits_{j=1}^{m}}
\sigma_{j}^{2}$. Thus,%
\[
p(t,x,y)=\frac{1}{2\pi\left\vert \Sigma\right\vert ^{1/2}}\exp(-\frac{1}{2}%
\begin{bmatrix}
x & y
\end{bmatrix}
\Sigma^{-1}%
\begin{bmatrix}
x & y
\end{bmatrix}
^{\intercal}),
\]
with%
\[
\Sigma=%
\begin{bmatrix}
\frac{\rho\left(  2\alpha t-\sin(2\alpha t)\right)  }{4\alpha^{3}} &
-\frac{\rho\sin^{2}(\alpha t)}{2\alpha^{2}}\\
-\frac{\rho\sin^{2}(\alpha t)}{2\alpha^{2}} & \frac{\rho\left(  2\alpha
t+\sin(2\alpha t)\right)  }{4\alpha}%
\end{bmatrix}
.
\]
After some algebraic manipulation we obtain that%
\[
p(t,x,y)=\frac{\alpha^{2}}{\pi\rho^{2}\left(  (\alpha t)^{2}-\sin^{2}(\alpha
t)\right)  ^{1/2}}\exp(M(t,x,y)),
\]
with%
\[
M(t,x,y)=-\frac{2t\left(  \alpha y\right)  ^{2}-y^{2}\alpha\sin(2\alpha
t)+2x^{2}\alpha^{3}t+2xy\alpha+(x\alpha)^{2}\sin(2\alpha t)-2xy\alpha
\cos(2\alpha t)}{\rho^{2}(\cos(2\alpha t)+2(\alpha t)^{2}-1)}.
\]
From this and (\ref{FPE}), it is easy to check that%
\[
V(x,y)=%
{\textstyle\int\limits_{0}^{\infty}}
p(s,x,y)ds,
\]
satisfies the conditions (\ref{conditions for V}).

Now, let us prove that for the stopping time%
\[
\tau=\inf\left\{  t\geqslant t_{0}:x(t)=y(t)=0\right\}  ,
\]
$\widetilde{\mathbb{P}}\mathbb{(}\omega\in\Omega:\tau\leq T)=0$, for arbitrary
$T>t_{0}$. For this, let us define additional stopping times%
\[
\tau_{n}=\inf\left\{  t\geqslant t_{0}:x^{2}(t)+y^{2}(t)\leq\frac{1}%
{n}\right\}  ,
\]
for $n=1,2,...$. Using condition (\ref{conditions for V}) and that
$(x(\tau_{n}\wedge T),y(\tau_{n}\wedge T))=(x(\tau_{n}),y(\tau_{n}))$ for all
$\omega\in\Gamma=\left\{  \omega\in\Omega:\text{ }\tau\leq T\right\}
\subset\left\{  \omega\in\Omega:\tau_{n}\leq T\right\}  $, we have%
\begin{align*}
\mathbb{E}\left(  V(x_{0},y_{0})\right)   &  =\mathbb{E}\left(  V(x(\tau
_{n}\wedge T),y(\tau_{n}\wedge T))\right) \\
&  \geqslant%
{\textstyle\int\limits_{\Gamma}}
V(x(\tau_{n}\wedge T),y(\tau_{n}\wedge T))d\widetilde{\mathbb{P}}\\
&  \geqslant%
{\textstyle\int\limits_{\Gamma}}
V(x(\tau_{n}),y(\tau_{n}))d\widetilde{\mathbb{P}}.
\end{align*}
In this way,%
\[
\mathbb{E}\left(  V(x_{0},y_{0})\right)  \geq%
{\textstyle\int\limits_{\Gamma}}
\lim_{n\longrightarrow\infty}V(x(\tau_{n}),y(\tau_{n}))d\widetilde{\mathbb{P}%
}.
\]
From (\ref{conditions for V}) and since $\left\vert x(\tau_{n})\right\vert
+\left\vert y(\tau_{n})\right\vert \leq\sqrt{2\left(  \left\vert x(\tau
_{n})\right\vert ^{2}+\left\vert y(\tau_{n})\right\vert ^{2}\right)  }%
\leq\sqrt{\frac{2}{n}}$, we have $\lim_{n\longrightarrow\infty}V(x(\tau
_{n}),y(\tau_{n}))=\infty$. Thus, since $\mathbb{E}\left(  V(x_{0}%
,y_{0})\right)  <\infty$, necessarily $\widetilde{\mathbb{P}}(\Gamma)=0$
holds. That is, all the zeros of (\ref{Linear_Oscillator_dim2}) are simple.
From this and taking into account that the coupled (\ref{harmonic oscillator})
and discoupled (\ref{discoupled harmonic oscillator}) harmonic oscillators
with $m\geq1$ have the same solution, almost surely, the proof is completed.
\end{proof}

For the zeros of coupled nonlinear oscillators we have the following result.

\begin{theorem}
The infinite many zeros of the coupled nonlinear oscillator
(\ref{nonlinear oscillator}) are simple.
\end{theorem}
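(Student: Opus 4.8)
The plan is to reproduce, for the simplicity property, exactly the reduction that proved Theorem \ref{Teor3}: transfer the result already established for the harmonic oscillator in Theorem \ref{Teor4} to the nonlinear oscillator (\ref{nonlinear oscillator}) through the absolutely continuous change of measure supplied by Theorem \ref{Teor2}. The conceptual point making this work is that ``all zeros are simple'' is an almost-sure sample-path property, and any such property is inherited across an absolutely continuous change of measure, provided the direction of absolute continuity is the correct one.

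First I would phrase the property as a single measurable event. Writing the solution as $(x(t),y(t))$, simplicity means that whenever a component $x^{i}$ vanishes, its velocity $y^{i}$ (recall $dx^{i}=y^{i}\,dt$) does not vanish simultaneously; so failure of simplicity is the event $N=\{\omega:\exists\, i\in\{1,\ldots,d\},\ \exists\, t\in[t_{0},\infty)\ \text{with}\ x^{i}(t)=y^{i}(t)=0\}$, which is measurable since the paths are continuous. The claim to be proved is thus $N$ is a null event. By Theorem \ref{Teor4}, the coupled harmonic oscillator (\ref{harmonic oscillator}) has simple zeros almost surely, i.e. $\mathbb{P}(N)=0$ when $(x,y)$ is interpreted as the harmonic solution under $\mathbb{P}$.

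Next I would invoke Theorem \ref{Teor2}: there is a measure $\widetilde{\mathbb{P}}$, absolutely continuous with respect to $\mathbb{P}$, under which the \emph{same} process $(x,y)$ is the unique solution of the nonlinear oscillator (\ref{nonlinear oscillator}) on $[0,T]$. Because the subset $N\subset\Omega$ is defined pathwise and does not depend on which measure one uses, and because $\widetilde{\mathbb{P}}\ll\mathbb{P}$ with $\mathbb{P}(N)=0$, absolute continuity immediately yields $\widetilde{\mathbb{P}}(N)=0$. Hence, under $\widetilde{\mathbb{P}}$, the nonlinear solution has simple zeros on $[0,T]$ almost surely. Two bookkeeping steps then finish the argument: since Theorem \ref{Teor2} is stated on a finite horizon, I would write $N=\bigcup_{k}N_{T_{k}}$ with $T_{k}\to\infty$ and $N_{T}$ the non-simplicity event restricted to $[t_{0},T]$, so that continuity of measure promotes the null property to all of $[t_{0},\infty)$; and since (\ref{nonlinear oscillator}) is posed on its own probability space, I would appeal to uniqueness in law for (\ref{nonlinear oscillator})—guaranteed by the smoothness of $f$ together with the linear growth condition (\ref{linear growth})—to conclude that simplicity also holds almost surely for the canonical solution, as $N$ is determined by the law of the path.

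I expect the only genuinely delicate point to be the direction of absolute continuity, since a reversed inclusion would transfer null sets the wrong way. Here no extra hypothesis is needed: we require precisely that a $\mathbb{P}$-null set remain $\widetilde{\mathbb{P}}$-null, i.e. $\widetilde{\mathbb{P}}\ll\mathbb{P}$, which is exactly the conclusion of Theorem \ref{Teor2}. Everything else is routine, so the proof is essentially the simplicity analogue of the reduction carried out for Theorem \ref{Teor3}.
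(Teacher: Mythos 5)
Your proposal is correct and follows exactly the route the paper takes: the paper's own proof is the one-line statement that the result ``is a straightforward consequence of Theorem \ref{Teor2} and Theorem \ref{Teor4},'' i.e.\ precisely the transfer of the almost-sure simplicity property from the harmonic oscillator to the nonlinear one via the absolutely continuous change of measure. Your writeup simply makes explicit the details (measurability of the non-simplicity event, the direction $\widetilde{\mathbb{P}}\ll\mathbb{P}$, the exhaustion of $[t_{0},\infty)$ by finite horizons, and uniqueness in law) that the paper leaves implicit.
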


\begin{proof}
It is a straightforward consequence of Theorem \ref{Teor2} and Theorem
\ref{Teor4} above.
\end{proof}

\section{The infinitely many zeros of the Local Linearized integrators for
coupled harmonic oscillators}

Let $(t)_{h}=\{t_{n}=t_{0}+nh:n=0,1,\ldots\},$ $h>0,$ be a partition of the
time interval $[t_{0},\infty)$. The Locally Linearized integrator for the
equation (\ref{harmonic oscillator}) is defined by the recursive expression
\cite{delaCruz16}%
\begin{equation}
\mathbf{x}_{n+1}=\mathbf{x}_{n}+\mathbf{u}_{n}+\mathbf{z}_{n+1},
\label{LL integrator oscillator}%
\end{equation}
for $n=0,1,\ldots$, with initial condition $\mathbf{x}_{0}=\mathbf{x}(t_{0})$,
where $\mathbf{x}_{n}=(x_{n},y_{n})^{\top},$ $x_{n},y_{n}\in%
\mathbb{R}
^{d}$, $\mathbf{u}_{n}=\mathbf{L}e^{\mathbf{C}_{n}h}\mathbf{r}$, and
$\mathbf{z}_{n+1}=\mathbf{Q}\Delta\mathbf{w}_{n}$. Here%
\[
\mathbf{C}_{n}=\left[
\begin{array}
[c]{ccc}%
\mathbf{0} & \mathbf{I} & y_{n}\\
-\Lambda^{2} & \mathbf{0} & -\Lambda^{2}x_{n}\\
\mathbf{0}_{1\times d} & \mathbf{0}_{1\times d} & 0
\end{array}
\right]  \in\mathbb{R}^{(2d+1)\times(2d+1)},
\]
$\mathbf{L}=\left[  \mathbf{I}_{2d}\text{ \ }\mathbf{0}_{2d\times1}\right]  $,
$\mathbf{r}=\left[  \mathbf{0}_{1\times2d}\text{ }1\right]  ^{\intercal}$,
$\Delta\mathbf{w}_{n}=\mathbf{w}_{t_{n+1}}-\mathbf{w}_{t_{n}}$, and
$\mathbf{Q}=[%
\begin{array}
[c]{c}%
Q^{1}\\
Q^{2}%
\end{array}
]$ is a $2d\times m$ matrix with $Q^{1}$, $Q^{2}\in\mathbf{%
\mathbb{R}
}^{d\times m}$. The Locally Linearized integrator $\mathbf{x}_{n+1}$
converges, strongly with order $1$, to the solution $\mathbf{x}(t_{n+1})$ of
(\ref{harmonic oscillator}) at $t_{n+1}$ as $h$ goes to zero (\cite{Jimenez
12}, \cite{delaCruz16}).

Next theorem deals with the reproduction of the oscillatory behavior of
coupled harmonic oscillators by the discrete dynamical system defined by the
Locally Linearized integrator.

\begin{theorem}
\label{TeoremaLL}Let $\lambda_{1},\ldots,\lambda_{d}$ be the eigenvalues of
$\Lambda$, and $\left\vert \lambda\right\vert _{\max}=\max\limits_{k}\left(
\left\vert \lambda_{k}\right\vert \right)  $. For the coupled harmonic
oscillator (\ref{harmonic oscillator}), each component of the Locally
Linearized integrator switches signs infinitely many times as $n\rightarrow
\infty$, almost surely, for any integration stepsize $h<\pi/\left\vert
\lambda\right\vert _{\max}$.
\end{theorem}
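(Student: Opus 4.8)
The plan is to first collapse the one--step map (\ref{LL integrator oscillator}) into an explicit linear recursion and then run, in the discrete setting, the Law--of--the--Iterated--Logarithm argument of Theorem \ref{Teor1}. Let $\mathbf{A}$ be the drift matrix of (\ref{harmonic oscillator}). The matrix $\mathbf{C}_{n}$ is block upper triangular with upper--left block $\mathbf{A}$, with the first $2d$ entries of its last column equal to $(y_{n},-\Lambda^{2}x_{n})^{\top}=\mathbf{A}\mathbf{x}_{n}$, and with zero bottom row; hence $e^{\mathbf{C}_{n}h}$ is again block upper triangular with upper--left block $e^{h\mathbf{A}}$ and last column whose first $2d$ entries are $(\int_{0}^{h}e^{s\mathbf{A}}\,ds)\,\mathbf{A}\mathbf{x}_{n}=(e^{h\mathbf{A}}-\mathbf{I})\mathbf{x}_{n}$. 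Applying $\mathbf{L}$ and $\mathbf{r}$ gives $\mathbf{u}_{n}=(e^{h\mathbf{A}}-\mathbf{I})\mathbf{x}_{n}$, so that (\ref{LL integrator oscillator}) reduces to the linear recursion $\mathbf{x}_{n+1}=e^{h\mathbf{A}}\mathbf{x}_{n}+\mathbf{Q}\Delta\mathbf{w}_{n}$, which I would unroll as $\mathbf{x}_{n}=e^{nh\mathbf{A}}\mathbf{x}_{0}+\sum_{r=0}^{n-1}e^{(n-1-r)h\mathbf{A}}\mathbf{Q}\Delta\mathbf{w}_{r}$.

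Next I would diagonalize $\Lambda=P\,\mathrm{diag}(\lambda_{1},\dots,\lambda_{d})\,P^{\intercal}$, so that $e^{jh\mathbf{A}}$ has exactly the cosine/sine block form of the fundamental matrix in Theorem \ref{Teor1} with $t-t_{0}$ replaced by $jh$. Extracting the first coordinate yields $x_{n}^{1}=D_{n}+S_{n}$, where $D_{n}$ is the contribution of $e^{nh\mathbf{A}}\mathbf{x}_{0}$ and is bounded uniformly in $n$ by the same estimate used for $D(t_{n})$ there, while $S_{n}=\sum_{r=0}^{n-1}\sum_{l=1}^{m}\sqrt{h}\,g_{n,r}^{l}\,\eta_{r,l}$ with $\eta_{r,l}=\Delta w_{r}^{l}/\sqrt{h}$ a fixed family of i.i.d.\ $\mathcal{N}(0,1)$ variables indexing the scalar Wiener increments and $g_{n,r}^{l}=\sum_{k}[P_{1k}(P^{\intercal}Q^{1})_{kl}\cos(\lambda_{k}(n-1-r)h)+P_{1k}\lambda_{k}^{-1}(P^{\intercal}Q^{2})_{kl}\sin(\lambda_{k}(n-1-r)h)]$. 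Since only the coefficients $g_{n,r}^{l}$ depend on $n$, this is precisely the weighted--sum form required by Lemma \ref{Lemma1}; as in Theorem \ref{Teor1} I may assume without loss of generality that the frequencies $\lambda_{k}$ are positive and distinct.

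The crux is the variance $s_{n}^{2}=h\sum_{r=0}^{n-1}\sum_{l}(g_{n,r}^{l})^{2}$. Expanding the squares and summing over $r$ (equivalently over $j=n-1-r=0,\dots,n-1$), the diagonal terms $k=k'$ give $\sum_{j}\cos^{2}(\lambda_{k}jh)$ and $\sum_{j}\sin^{2}(\lambda_{k}jh)$, each equal to $\tfrac{n}{2}$ plus a bounded oscillatory remainder, while every off--diagonal term produces geometric sums $\sum_{j=0}^{n-1}\cos(\omega jh)$ or $\sum_{j=0}^{n-1}\sin(\omega jh)$ with $\omega\in\{2\lambda_{k},\,\lambda_{k}\pm\lambda_{k'}\}$. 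Each such sum stays bounded in $n$ exactly when $\omega h\not\equiv0\pmod{2\pi}$, and this is where the hypothesis $h<\pi/\lvert\lambda\rvert_{\max}$ is indispensable: it forces $\lambda_{k}h\in(0,\pi)$, whence $2\lambda_{k}h\in(0,2\pi)$ and $(\lambda_{k}\pm\lambda_{k'})h\in(-2\pi,2\pi)\setminus\{0\}$ for distinct positive frequencies, so none of the resonant $\omega h$ is a nonzero multiple of $2\pi$ and all oscillatory sums remain uniformly bounded. Consequently $s_{n}^{2}=\tfrac{h}{2}\sum_{l}\sum_{k}[(P_{1k}(P^{\intercal}Q^{1})_{kl})^{2}+(P_{1k}\lambda_{k}^{-1}(P^{\intercal}Q^{2})_{kl})^{2}]\,n+C_{n}$ with $C_{n}$ bounded, giving $s_{n}^{2}/n\to c$ with $c>0$ exactly as for the constant in Theorem \ref{Teor1}. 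I expect this discrete resonance bookkeeping to be the main obstacle: in the continuous case the exact integrals automatically annihilate the cross terms, whereas here aliasing can make a geometric sum grow linearly and destroy the estimate unless the stepsize is kept below $\pi/\lvert\lambda\rvert_{\max}$.

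Finally, the weights are uniformly bounded, $h(g_{n,r}^{l})^{2}\le h\,(\sum_{k}(\lvert P_{1k}(P^{\intercal}Q^{1})_{kl}\rvert+\lvert P_{1k}\lambda_{k}^{-1}(P^{\intercal}Q^{2})_{kl}\rvert))^{2}$, so both hypotheses of Lemma \ref{Lemma1} hold for $S_{n}$. Hence, for $0<\varepsilon<1$, almost surely $S_{n}>(1-\varepsilon)\sqrt{2s_{n}^{2}\log\log s_{n}^{2}}$ for infinitely many $n$ and $S_{n}<(-1+\varepsilon)\sqrt{2s_{n}^{2}\log\log s_{n}^{2}}$ for infinitely many $n$. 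Because $s_{n}^{2}\to\infty$ the thresholds diverge while $\lvert D_{n}\rvert$ stays bounded, so $x_{n}^{1}>0$ infinitely often and $x_{n}^{1}<0$ infinitely often; thus the first component of the integrator changes sign infinitely many times, almost surely. The remaining components are handled identically by selecting the corresponding row of $e^{jh\mathbf{A}}$, which completes the proof.
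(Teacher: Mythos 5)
Your proposal is correct and follows essentially the same route as the paper: reduce the integrator to the closed form $\mathbf{x}_{n+1}=\mathbf{M}^{n+1}\mathbf{x}_{0}+\sum_{r}\mathbf{M}^{r}\mathbf{Q}\Delta\mathbf{w}_{n-r}$ (which the paper simply imports from Lemma 3.2 of \cite{delaCruz16} while you re-derive it from the structure of $e^{\mathbf{C}_{n}h}$), diagonalize $\Lambda$, show $s_{n}^{2}/n$ has a positive limit by bounding the oscillatory geometric sums under $h<\pi/\left\vert \lambda\right\vert _{\max}$, and apply Lemma \ref{Lemma1} together with the uniform bound on the deterministic part. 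Your direct trigonometric bookkeeping of the frequencies $2\lambda_{k}$ and $\lambda_{k}\pm\lambda_{k'}$ is just a rephrasing of the paper's phased-cosine/complex-exponential computation, so no further comment is needed.
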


\begin{proof}
Lemma 3.2 in \cite{delaCruz16} states that the Locally Linearized integrator
(\ref{LL integrator oscillator}) can be written as%
\begin{equation}
\mathbf{x}_{n+1}=\mathbf{M}^{n+1}\mathbf{x}_{0}+%
{\displaystyle\sum\limits_{r=0}^{n}}
\mathbf{M}^{r}\mathbf{Q}\Delta\mathbf{w}_{n-r},\nonumber
\end{equation}
where%
\[
\mathbf{M}^{r}=%
\begin{bmatrix}
\text{cos}\left(  r\Lambda h\right)   & \Lambda^{-1}\text{sin}\left(  r\Lambda
h\right)  \\
-\Lambda\text{sin}\left(  r\Lambda h\right)   & \text{cos}\left(  r\Lambda
h\right)
\end{bmatrix}
.
\]
Likewise in the proof of Theorem \ref{Teor1}, by using the Spectral Theorem,
the first component $x_{n+1}^{1}$ of $\mathbf{x}_{n+1}$ can be written%
\begin{equation}
x_{n+1}^{1}=D_{n+1}+S_{n},\label{Teor2.3}%
\end{equation}
where%
\[
D_{n+1}=\sum_{k=1}^{d}\left(  P_{1k}\cos((n+1)h\lambda_{k})\left\langle
P_{k},x_{0}\right\rangle +P_{1k}\lambda_{k}^{-1}\sin((n+1)h\lambda
_{k})\left\langle P_{k},y_{0}\right\rangle \right)  ,
\]
and%
\begin{equation}
S_{n}=%
{\displaystyle\sum\limits_{r=0}^{n}}
V_{nr},\label{Teor2.5}%
\end{equation}
being%
\begin{equation}
V_{nr}=%
{\displaystyle\sum\limits_{l=1}^{m}}
{\displaystyle\sum\limits_{j=1}^{d}}
\left(  e_{j}^{l}\cos(r\lambda_{j}h)+f_{j}^{l}{}\sin(r\lambda_{j}h)\right)
\Delta\mathbf{w}_{n-r}^{l},\label{Teor2.7}%
\end{equation}
with $e_{j}^{l}=P_{1j}\left\langle P_{j},Q_{l}^{1}\right\rangle $, $f_{j}%
^{l}=P_{1j}\lambda_{j}^{-1}\left\langle P_{j},Q_{l}^{2}\right\rangle $ and
$Q_{l}^{1},$ $Q_{l}^{2}$ the column vectors of $Q^{1}$ and $Q^{2}$, respectively.

Without loss of generality, let us assume that $\lambda_{k}>0$ and
$\lambda_{k}\neq$ $\lambda_{r}$ for all $k\neq r$ with $k,r=1,\ldots,d$.
Indeed, when there are only $d^{\ast}<d$ different values $\lambda_{j}^{\ast}$
of $\left\vert \lambda_{k}\right\vert $, $k=1,\ldots,d$ and $j=1,\ldots
,d^{\ast}$, the expression (\ref{Teor2.7}) can be rewritten as
\[
V_{nr}=%
{\displaystyle\sum\limits_{l=1}^{m}}
{\displaystyle\sum\limits_{j=1}^{d^{\ast}}}
\left(  E_{j}^{l}\cos(r\lambda_{j}^{\ast}h)+F_{j}^{l}{}\sin(r\lambda
_{j}h)\right)  \Delta\mathbf{w}_{n-r}^{l},
\]
where $E_{j}^{l}=%
{\displaystyle\sum\limits_{k=1}^{d}}
e_{k}^{l}\delta_{\lambda_{j}^{\ast}}^{\left\vert \lambda_{k}\right\vert }$,
$F_{j}^{l}=%
{\displaystyle\sum\limits_{k=1}^{d}}
f_{k}^{l}\delta_{\lambda_{j}^{\ast}}^{\left\vert \lambda_{k}\right\vert
}(1_{\lambda_{k}>0}-1_{\lambda_{k}<0})$, and $\delta$ is the Kronecker delta.
For this expression of $V_{nr}$ the analysis below would be the same as that
for (\ref{Teor2.7}) with the above mentioned assumptions on $\lambda_{k}$.

Since $\Delta\mathbf{w}_{n-r}^{l}$ are independent Gaussian random variables
with zero mean and variance $h$, $\{V_{nr}\}$ defines a sequence of $i.i.d.$
Gaussian random variable with zero mean and variance%
\[
\sigma_{nr}^{2}=h%
{\displaystyle\sum\limits_{l=1}^{m}}
\left(
{\displaystyle\sum\limits_{j=1}^{d}}
\left(  e_{j}^{l}\cos(r\lambda_{j}h)+f_{j}^{l}{}\sin(r\lambda_{j}h)\right)
\right)  ^{2}.
\]
Thus, (\ref{Teor2.5}) can be rewritten as%
\[
S_{n}=%
{\textstyle\sum\limits_{r=0}^{n}}
\sigma_{nr}\eta_{r},
\]
where $\eta_{0},...,\eta_{n}$ are $i.i.d.$ $\mathcal{N}(0,1)$ random
variables. Thus, the variance $s_{n}^{2}$ of $S_{n}$ satisfies%
\begin{equation}
s_{n}^{2}=%
{\displaystyle\sum\limits_{r=0}^{n}}
\sigma_{nr}^{2}.\label{Teor2.1}%
\end{equation}
Note that, for all $j=1,...,d$,%
\[%
{\displaystyle\sum\limits_{l=1}^{m}}
e_{j}^{l}\cos(r\lambda_{j}h)+%
{\displaystyle\sum\limits_{l=1}^{m}}
f_{j}^{l}{}\sin(r\lambda_{j}h)=c_{j}\cos\left(  r\lambda_{j}h-\alpha
_{j}\right)  ,
\]
where $c_{j}^{2}=\left(
{\displaystyle\sum\limits_{l=1}^{m}}
e_{j}^{l}\right)  ^{2}+\left(
{\displaystyle\sum\limits_{l=1}^{m}}
f_{j}^{l}{}\right)  ^{2}$, $\alpha_{j}=\arctan\left(
{\displaystyle\sum\limits_{l=1}^{m}}
f_{j}^{l}{}/%
{\displaystyle\sum\limits_{l=1}^{m}}
e_{j}^{l}\right)  $ for $%
{\displaystyle\sum\limits_{l=1}^{m}}
e_{j}^{l}\neq0$, and $\alpha_{j}=\frac{\pi}{2}$ for $%
{\displaystyle\sum\limits_{l=1}^{m}}
e_{j}^{l}=0.$ \newline From this and by using the identity $\cos
(\theta)=\left(  \exp(i\theta)+\exp(-i\theta)\right)  /2$, we obtain that%
\begin{align}
\sigma_{nr}^{2} &  =h\left(
{\displaystyle\sum\limits_{j=1}^{d}}
c_{j}\cos\left(  r\lambda_{j}h-\alpha_{j}\right)  \right)  ^{2}\nonumber\\
&  =\frac{h}{2}%
{\textstyle\sum\limits_{j,k=1}^{d}}
c_{k}c_{j}\operatorname{Re}\left\{  \exp(\text{$i$}r(\lambda_{k}+\lambda
_{j})h)\exp(-\text{$i$}(\alpha_{k}+\alpha_{j}))\right.  \label{Teor2.2}\\
&  +\left.  \exp(\text{$i$}r(\lambda_{k}-\lambda_{j})h)\exp(\text{$i$}%
(\alpha_{j}-\alpha_{k}))\right\}  ,\nonumber
\end{align}
where $\operatorname{Re}$ denotes the real part of a complex number. Under the
assumption $h<\pi/\left\vert \lambda\right\vert _{\max}$, it holds that
$\theta\neq0\operatorname{mod}2\pi$ for all $\theta=h\left(  \lambda
_{j}+\lambda_{k}\right)  $ with $1\leq j,k\leq d$, and for all $\theta
=h\left(  \lambda_{k}-\lambda_{j}\right)  $ with $1\leq j\neq k\leq d$.
Therefore, from (\ref{Teor2.2}) and the known expression of the partial sum of
the geometric series%
\[%
{\displaystyle\sum\limits_{r=0}^{n}}
\exp(\text{$i$}r\theta)=\left\{
\begin{array}
[c]{cc}%
n+1 & \text{if }\theta=0\text{ mod }2\pi\\
\frac{1-\exp(\text{$i$}\theta\left(  n+1\right)  )}{1-\exp(\text{$i$}\theta)}
& \text{otherwise}%
\end{array}
\right.  ,
\]
it is obtained that%
\[
s_{n}^{2}=n\frac{h}{2}%
{\textstyle\sum\limits_{k=1}^{d}}
c_{k}^{2}+C_{n},
\]
where $C_{n}$ is uniformly bound for all $n$. Thus, the assumption
$h<\pi/\left\vert \lambda\right\vert _{\max}$ implies that%
\[
\lim_{n\rightarrow\infty}\frac{s_{n}^{2}}{n}=\frac{h}{2}%
{\textstyle\sum\limits_{k=1}^{d}}
c_{k}^{2}>0.
\]
Since $\sigma_{nr}^{2}$ is bounded for all $n$ and $r$, the Law of the
Iterated Logarithms stated in Lemma \ref{Lemma1} holds for $S_{n}$. Thus, for
$0<\varepsilon<1$, (\ref{LIL1}) implies that%
\[
S_{n}>\left(  1-\varepsilon\right)  \sqrt{2s_{n}^{2}\left(  \log\log s_{n}%
^{2}\right)  }\text{ \ for infinitely many values of }n\text{ (almost
surely.).}%
\]
In addition, since%
\[
\left\vert D_{n+1}\right\vert \leq\left\vert P\right\vert ^{2}(\left\vert
x_{0}\right\vert +\left\vert y_{0}\right\vert \max\limits_{k}\left\{
\lambda_{k}^{-1}\right\}  ),
\]
for all $n$, the fist component (\ref{Teor2.3}) of the Locally Linearized
integrator (\ref{LL integrator oscillator}) satisfies
\[
x_{n+1}^{1}>0\text{ infinitely often as }n\rightarrow\infty\text{ (almost
surely).}%
\]
Similarly, (\ref{LIL2}) implies that
\[
S_{n}<\left(  -1+\varepsilon\right)  \sqrt{2s_{n}^{2}\left(  \log\log
s_{n}^{2}\right)  }\text{ for infinitely many values of }n\text{ (almost
surely),}%
\]
for $0<\varepsilon<1$, and so%
\[
x_{n+1}^{1}<0\text{ infinitely often as }n\rightarrow\infty\text{ (almost
surely).}%
\]
We can proceed similarly to prove that the other components of $\mathbf{x}%
_{n+1}$ also change sign infinitely often. This completes the proof.
\end{proof}

It was shown in \cite{delaCruz16} that, likewise the exact solution of the
simple harmonic oscillator (equation (\ref{harmonic oscillator}) with $d=1)$,
the path of the Local Linearized integrator (\ref{LL integrator oscillator})
switches signs infinitely many times as $n\rightarrow\infty$ almost surely for
any integration stepsize $h$. However, according to Theorem \ref{TeoremaLL},
in the case of the coupled oscillator (\ref{harmonic oscillator}), this
dynamics of the Local Linearized integrator (\ref{LL integrator oscillator})
is only guaranteed for stepsize $h<\pi/\max\left(  \left\vert \lambda
_{1}\right\vert ,\ldots,\left\vert \lambda_{d}\right\vert \right)  $, where
$\lambda_{1},\ldots,\lambda_{d}$ are the eigenvalues of $\Lambda$.

Theorem \ref{TeoremaLL} complements the results obtained in \cite{delaCruz16}
that demonstrate the capability of the discrete dynamical system defined by
the Local Linearized integrators for reproducing other essential continuous
dynamics of the coupled harmonic oscillators: the linear growth of energy
along the paths, and the symplectic structure of Hamiltonian oscillators.

Furthermore, since the exponential and trigonometric integrators considered in
\cite{Tocino07} and \cite{Cohen12} reduce to the expression
(\ref{LL integrator oscillator}) when they are applied to equation
(\ref{harmonic oscillator}), the Theorem \ref{TeoremaLL} can be also applied
for these integrators. In this way, these integrators with stepsize
$h<\pi/\max\left(  \left\vert \lambda_{1}\right\vert ,\ldots,\left\vert
\lambda_{d}\right\vert \right)  $ also switch signs infinitely many times as
$n\rightarrow\infty$ almost surely.

\section{Conclusion}

In this work, previous results concerning the infinitely many zeros of the
single harmonic oscillators driven by random forces were extended to the
general class of coupled harmonic oscillators. Furthermore, various classes of
coupled nonlinear oscillators having this oscillatory behavior were
identified. The ability of the discrete dynamical system defined by various
numerical integrators for reproducing this oscillatory property of the
continuous systems was also analyzed, which complements known results of these
integrators for the simple harmonic oscillators driven by random
forces.\newline

\textbf{Acknowledgements} \ The authors thank the financial support of a
FGV/EMAp project, Brazil, and Centro de Investigaci\'{o}n en Matem\'{a}ticas
(CIMAT), Mexico.


\begin{thebibliography}{99}                                                                                               %


\bibitem {Anishchenko13}V.S. Anishchenko, T. E. Vadivasova, A. V. Feoktistov
and G. I Strelkova, Stochastic Oscillators, In R.G.Rubio et al. (eds.):
Without Bounds: A Scientific Canvas of Nonlinearity and Complex Dynamics
Understanding Complex Systems, (2013), 539-557.

\bibitem {Arnold 1974}L. Arnold, Stochastic Differential Equations: Theory and
Applications, Wiley-Interscience Publications, New York, 1974.

\bibitem {Bismut81}J. M. Bismut, M\'{e}canique Al\'{e}atoire, Lecture Notes in
Mathematics, \textbf{866}, Springer, Berlin, 1981.

\bibitem {Cohen12}D. Cohen and M. Sigg, Convergence analysis of trigonometric
methods for stiff second-order stochastic differential equations. Num. Math.,
\textbf{121} (2012), 1-29.

\bibitem {delaCruz16}H. de la Cruz, J.C. Jimenez and J. P. Zubelli, Locally
Linearized methods for the simulation of stochastic oscillators driven by
random forces, BIT Num. Math., \textbf{57(1)} (2016), 123-151.

\bibitem {Gitterman05}M. Gitterman, The noisy oscillator, World Scientific, 2005.

\bibitem {Ghorbanian15}P. Ghorbanian, S. Ramakrishnan, H. Ashrafiuon,
Stochastic non-linear oscillator models of EEG: the Alzheimer's disease case.
Frontiers in Computational Neuroscience, \textbf{9:48} (2015), 1-14.

\bibitem {Ghorbanian15b}P. Ghorbanian , S. Ramakrishnan, A. Whitman and H.
Ashrafiuon, A phenomenological model of EEG based on the dynamics of a
stochastic Duffing-van der Pol oscillator network. Biomed. Signal Process.
Control, \textbf{15} (2015), 1-10.

\bibitem {Higham08}N.J. Higham, Function of matrices, SIAM, 2008.

\bibitem {Jimenez 12}J. C. Jimenez and H. de la Cruz, Convergence rate of
strong Local Linearization schemes for stochastic differential equations with
additive noise. BIT Num. Math., \textbf{52} (2012), 357-382.

\bibitem {Johnson94}N.L. Johnson, S. Kotz and N. Balakrishnan, Continuous
Univariate Distributions, \textbf{1}, 2nd ed., John Wiley \& Sons, 1994.

\bibitem {Kliemann95}W. Kliemann and S. Namachchivaya, Nonlinear Dynamics and
Stochastic Mechanics, CRC, 1995.

\bibitem {Mao07}X. Mao, Stochastic differential equations and applications,
2nd ed., Woodhead Publishing Limited, 2007.

\bibitem {Markus88}L. Markus and A. Weerasinghe, Stochastic oscillators. J.
Differential Equations, \textbf{71} (1988), 288-314.

\bibitem {Markus93}L. Markus, and A. Weerasinghe, Stochastic Non-Linear
Oscillators. Advances in Applied Probability, \textbf{25} (1993), 649-66.

\bibitem {Meirovitch86}L. Meirovitch, Elements of Vibration Analysis, 2nd ed.,
McGraw-Hill, 1986.

\bibitem {Milstein02}G.N. Milstein, Y.M. Repin and M.V. Tretyakov, Symplectic
integration of Hamiltonian systems with additive noise. SIAM J. Numer. Anal.,
\textbf{39} (2002), 2066-2088.

\bibitem {Tocino07}A. Tocino, On preserving long-time features of a linear
stochastic oscillator, BIT Num. Math., \textbf{47} (2007), 189-196.

\bibitem {Tomkins74}R.J. Tomkins, On the law of the Iterated Logarithm for
Double Sequences of Random Variables. Z. Wahrscheinlichkeitstheorie verw.
Gebiete, \textbf{30} (1974), 303-314.

\bibitem {Schurz08}H. Schurz, Analysis and discretization of semi-linear
stochastic wave equations with cubic nonlinearity and additive space-time
noise. Discrete and Continuous Dynamical Systems - Series S. \textbf{1} (2)
(2008), 353-363.

\bibitem {Schurz 09}H. Schurz, New stochastic integrals, oscillation theorems
and energy identities. Comm. Appl. Anal., \textbf{13} (2009), 181-194.

\bibitem {Senosiain14}M.J. Senosiain and A. Tocino, A review on numerical
schemes for solving a linear stochastic oscillator. BIT Num. Math.,
\textbf{55} (2015), 515-529.

\bibitem {Stratonovich67}R.L. Stratonovich, Topics in the Theory of Random
Noise, \textbf{2}, Routledge, New York, 1967.
\end{thebibliography}
\end{document}